\numberwithin{equation}{section}
     \newtheorem{thm}{Theorem}[section]
     \newtheorem{cor}[thm]{Corollary}
     \newtheorem{lem}[thm]{Lemma}
\theoremstyle{definition}
      \newtheorem{defn}{Definition}[section]
     \newtheorem{exmp}{Example}[section]
\theoremstyle{remark}
     \newtheorem{rem}{Remark}[section]
\newcommand{\R}{\mathbb{R}}
\newcommand{\cS}{\mathcal{S}}
\newcommand{\cY}{\mathcal{Y}}
\newcommand{\supp}{\operatorname{supp}}
\newcommand{\comp}{\mathrm{comp}}
\newcommand{\weak}{\mathrm{weak}}
\newcommand{\ls}{\lesssim}
\newcommand{\gs}{\gtrsim}
\newcommand{\vp}{\varphi}
\newcommand{\cPhi}{\widetilde\Phi}
\newcommand{\cGdec}{\mathcal{G}^{\rm dec}}
\newcommand{\Cic}{C^{\infty}_{\comp}}
\newcommand{\Li}{L^{\infty}}
\newcommand{\LP}{L^{\Phi}}
\newcommand{\LcP}{L^{\widetilde\Phi}}
\newcommand{\wL}{\mathrm{w}\hskip-0.6pt{L}}
\newcommand{\wLP}{\mathrm{w}\hskip-0.6pt{L}^{\Phi}}
\newcommand{\LPp}{L^{(\Phi,\vp)}}
\newcommand{\wLPp}{\mathrm{w}\hskip-0.6pt{L}^{(\Phi,\vp)}}
\newcommand{\LPsps}{L^{(\Psi,\psi)}}
\newcommand{\wLPsps}{\mathrm{w}\hskip-0.6pt{L}^{(\Psi,\psi)}}
\newcommand{\biP}{{\it{\overline{\boldsymbol \varPhi}}}}
\newcommand{\iPy}{{{\boldsymbol \varPhi}_Y}}
\newcommand{\bcY}{{\overline{\cY}}}
\newcommand{\biPy}{{\overline{\boldsymbol \varPhi}_Y}}
\newcommand{\dtwo}{\Delta_2}
\newcommand{\bdtwo}{\overline\Delta_2}
\newcommand{\bntwo}{\overline\nabla_2}
\newcommand{\dlim}{\displaystyle\lim}
\newcommand{\msckw}{%
\footnotetext{\hspace{-0.35cm} 2020 {\it Mathematics Subject Classification}. 
46E30, 42B35, 42B25, 42B20.
\endgraf{\it Key words and phrases.} 
Orlicz-Morrey space, modular inequality, maximal function, singular integral. \par
Ryota Kawasumi,
Minohara 1-6-3 (B-2), Misawa, Aomori 033-0033, Japan, \\
rykawasumi@gmail.com  \par
Eiichi Nakai,
Department of Mathematics,
Ibaraki University,
Mito, Ibaraki 310-8512, Japan,
eiichi.nakai.math@vc.ibaraki.ac.jp
}
}
\begin{document}

\title{%
Weighted boundedness of the Hardy-Littlewood maximal and
Calder\'on-Zygmund operators on Orlicz-Morrey and weak Orlicz-Morrey spaces
\msckw
}
\author{Ryota Kawasumi and Eiichi Nakai}
\date{}

\maketitle

\begin{abstract}
For the Hardy-Littlewood maximal and Calder\'on-Zygmund operators,
the weighted boundedness on the Lebesgue spaces are well known.
We extend these to the Orlicz-Morrey spaces.
Moreover, we prove the weighted boundedness on the weak Orlicz-Morrey spaces.
To do this we show the weak-weak modular inequality.
The Orlicz-Morrey space and its weak version contain 
weighted Orlicz, Morrey and Lebesgue spaces and their weak versions as special cases.
Then we also get the boundedness for these function spaces as corollaries.
\end{abstract}

\section{Introduction}\label{sec:intro}

Let $L^p(\R^n,w)$ and $\wL^p(\R^n,w)$ be the weighted Lebesgue space 
and its weak version
on the $n$-dimensional Euclidean space $\R^n$,
respectively.
Then it is well known that
the Hardy-Littlewood maximal operator $M$ is bounded
from $L^1(\R^n,w)$ to $\wL^1(\R^n,w)$ if $w\in A_1$,
and, 
from $L^p(\R^n,w)$ to itself if $w\in A_p$, $p\in(1,\infty]$,
where $A_p$ is the Muckenhoupt class, see \cite{Muckenhoupt1972}.
The Calder\'on-Zygmund operators have the same boundedness except the case $p=\infty$.
It is also known that $M$ is bounded
from $\wL^p(\R^n,w)$ to itself if $w\in A_p$, $p\in(1,\infty]$.
This boundedness can be obtained by using the property of $A_p$-weights
and  the Marcinkiewicz interpolation theorem
for the operators of restricted weak type,
see \cite[Theorem 1.4.19 (page 61)]{Grafakos2014GTM249} for example.
See also \cite{Komori2012} for its simple proofs.

In this paper we extend these boundedness to 
the weighted Orlicz-Morrey space $\LPp(\R^n,w)$ 
and its weak version $\wLPp(\R^n,w)$,
where $\Phi$ is a Young function and $\vp$ is a variable growth function. 
Namely, we prove the following boundedness:
\begin{align}
 \|Tf\|_{\wLPp(\R^n,w)}&\le C\|f\|_{\LPp(\R^n,w)}, \label{w} \\
 \|Tf\|_{\LPp(\R^n,w)}&\le C\|f\|_{\LPp(\R^n,w)}, \label{s} \\
 \|Tf\|_{\wLPp(\R^n,w)}&\le C\|f\|_{\wLPp(\R^n,w)}, \label{ww}
\end{align}
where $T$ is 
the Hardy-Littlewood maximal operator or a Calder\'on-Zygmund operator.
The function spaces $\LPp(\R^n,w)$ and $\wLPp(\R^n,w)$ contain 
weighted Orlicz, Morrey and Lebesgue spaces and their weak versions as special cases.
Then we also get the boundedness for these function spaces as corollaries.

For a measurable set $G\subset\R^n$, we denote its Lebesgue measure 
and characteristic function by $|G|$ and $\chi_G$, respectively.
A weight is a locally integrable function on $\R^n$ 
which takes values in $(0,\infty)$ almost everywhere. 
For a weight $w$ and a measurable set $G$, 
we define $w(G) = \int_G w(x)\,dx$.
For $p\in(0,\infty]$, the weighted Lebesgue space and 
its weak version with respect to the measure $w(x)\,dx$ 
are denoted by $L^p(\R^n,w)$ and $\wL^p(\R^n,w)$, respectively.

For a function $\vp:\R^n\times(0,\infty)\to(0,\infty)$
and a ball $B=B(a,r)$, 
we denote $\vp(a,r)$ by $\vp(B)$. 
For a weight $w$, a measurable set $G$ and a function $f$, 
let
\begin{equation*}
 w(G,f,t)
 =
 w(\{x\in\ G:|f(x)|>t\}),
 \quad t\in[0,\infty).
\end{equation*}
In the case $G=\R^n$, we briefly denote it by $w(f,t)$.

\begin{defn}[Orlicz-Morrey space and weak Orlicz-Morrey space]\label{defn:OM}
For a Young function $\Phi:[0,\infty]\to[0,\infty]$, 
a function $\vp:\R^n\times(0,\infty)\to(0,\infty)$, a weight $w$
and a ball $B$, 
let
\begin{align*} 
 \|f\|_{\Phi,\vp,w,B}
 &= 
 \inf\left\{ \lambda>0: 
  \frac{1}{\vp(B)w(B)}
  \int_B \!\Phi\!\left(\frac{|f(x)|}{\lambda}\right)\! w(x)\,dx \le 1
           \right\},
\\
 \|f\|_{\Phi,\vp,w,B,\weak }
 &= 
 \inf\left\{ \lambda>0: 
  \frac{1}{\vp(B)w(B)}
 \sup_{t\in (0,\infty)} \Phi(t)\, w\!\left( B,\frac{f}{\lambda}, t\right)
 \le 1
           \right\}.
\end{align*}
Let $\LPp(\R^n,w)$ and $\wLPp(\R^n,w)$ be the sets of all functions $f$ 
such that the following functionals are finite, respectively: 
\begin{align*} 
 \|f\|_{\LPp(\R^n,w)} 
 &=
 \sup_{B}  \|f\|_{\Phi,\vp,w,B},
\\
 \|f\|_{\wLPp(\R^n,w)} 
 &=
 \sup_{B}  \|f\|_{\Phi,\vp,w,B,\weak},
\end{align*}
where the suprema are taken over all balls $B$ in $\R^n$.
(For the definition of the Young function, see the next section.)
\end{defn}

Then $\|f\|_{\LPp(\R^n,w)}$ is a norm and thereby $\LPp(\R^n,w)$ is a Banach space,
and $\|f\|_{\wLPp(\R^n,w)}$ is a quasi norm and thereby $\wLPp(\R^n,w)$ is a quasi Banach space.
The Orlicz-Morrey space $\LPp(\R^n)$ was first studied in \cite{Nakai2004KIT}.
The spaces $\LPp(\R^n)$ and $\wLPp(\R^n)$ 
were investigated 
in \cite{Ho2013,Nakai2008Studia,Nakai2008KIT,Sawano2019}, etc.
For other kinds of Orlicz-Morrey spaces, see 
\cite{Deringoz-Guliyev-Nakai-Sawano-Shi2019Posi,Deringoz-Guliyev-Samko2014,Gala-Sawano-Tanaka2015,Guliyev-Hasanov-Sawano-Noi2016,Sawano-Sugano-Tanaka2012}, etc.
See also \cite{Ho2019,Ho2020} for Morrey-Banach spaces. 

The function spaces $\LPp(\R^n,w)$ and $\wLPp(\R^n,w)$ contain several function spaces 
as special cases.
If $\vp(B)=1/w(B)$, 
then $\LPp(\R^n,w)$ and $\wLPp(\R^n,w)$
coincide with the weighted Orlicz space $\LP(\R^n,w)$ and its weak version $\wLP(\R^n,w)$,
respectively.
If $\Phi(t)=t^p$, $1\le p<\infty$, 
then $\LPp(\R^n,w)$ and $\wLPp(\R^n,w)$ are denoted by 
$L^{(p,\vp)}(\R^n,w)$ and $\wL^{(p,\vp)}(\R^n,w)$, respectively,
which are the generalized weighted Morrey space and its weak version.
If $\vp(B)=w(B)^{\kappa-1}$, $0<\kappa<1$, 
then $L^{(p,\vp)}(\R^n,w)$ and $\wL^{(p,\vp)}(\R^n,w)$ 
are denoted by $L^{p,\kappa}(\R^n,w)$ and $\wL^{p,\kappa}(\R^n,w)$, respectively,
which were introduced by Komori and Shirai~\cite{Komori-Shirai2009}.
If $\Phi(t)=t^p$, $1\le p<\infty$, and $\vp(B)=1/w(B)$,
then $\LPp(\R^n,w)=L^p(\R^n,w)$ 
and $\wLPp(\R^n,w)=\wL^p(\R^n,w)$.
Therefore, 
by \eqref{w}, \eqref{s} and \eqref{ww},
we also have the norm inequalities for these function spaces as corollaries.

Let $A_p$ be the Muckenhoupt class of weights, see Definition~\ref{defn:Ap}.
Let $p\in[1,\infty)$ and $w$ is a weight.
Muchenhoupt~\cite{Muckenhoupt1972} proved that
the Hardy-Littlewood maximal operator $M$ is bounded 
from $L^p(\R^n,w)$ to $\wL^p(\R^n,w)$ if and only if $w\in A_p$.
He also proved that, for $p\in(1,\infty)$, 
$M$ is bounded from $L^p(\R^n,w)$ to itself if and only if $w\in A_p$.
For the boundedness of the Hiibert transform, the same conclusions hold,
see \cite{Hunt-Muckenhoupt-Wheeden1973}.

Let $p\in(1,\infty)$ and $w$ is a weight. 
It is also known that 
$M$ is bounded from $\wL^p(\R^n,w)$ to itself if and only if $w\in A_p$.
We learned from \cite{Komori2012}
two kinds of simple proofs of this boundedness
by Grafakos and by Yabuta.
By our results, we see that 
any Calder\'on-Zygmund operator
is bounded from $\wL^p(\R^n,w)$ to itself if $w\in A_p$.
In particular,
the Reisz transforms are bounded from $\wL^p(\R,w)$ to itself 
if and only if $w\in A_p$, see Corollary~\ref{cor:modular}.

In the next section 
we state on the functions $\Phi$, $\vp$ and $w$ by which we define 
$\LPp(\R^n,w)$ and $\wLPp(\R^n,w)$.
Then we state main results in Section~\ref{sec:main}.
We recall the properties of Young functions
and show a lemma in Section~\ref{sec:prop}.
To prove the norm inequalities \eqref{w}, \eqref{s} and \eqref{ww}
we need the modular inequalities
\begin{align*}
 \sup_{t\in (0,\infty)}\Phi(t)w(Tf,t)
 &\le
 C \int_{\R^n}\Phi(c|f(x)|)w(x)\,dx,
\\
 \int_{\R^n}\Phi(|Tf(x)|)w(x)\,dx
 &\le
 C \int_{\R^n}\Phi(c|f(x)|)w(x)\,dx,
\\
 \sup_{t\in (0,\infty)}\Phi(t)w(Tf,t)
 &\le
 C\sup_{t\in (0,\infty)}\Phi(t)w(f,t),
\end{align*}
respectively,
in which the first and the second are known.
We prove the third in Section~\ref{sec:modular}.
Then, using the results in Sections~\ref{sec:prop} and \ref{sec:modular},
we prove the main results in Section~\ref{sec:proof}.
In the above, each modular inequality means that 
it holds for any function $f$ 
such that the left-hand side is finite, and that the constant $C$ is independent of $f$.
We will make similar abbreviated statements involving other modular and (quasi-)norm inequalities; 
they will be always interpreted in the same way.

At the end of this section, we make some conventions. 
Throughout this paper, we always use $C$ to denote a positive constant 
that is independent of the main parameters involved 
but whose value may differ from line to line.
Constants with subscripts, such as $C_p$, is dependent on the subscripts.
If $f\le Cg$, we then write $f\ls g$ or $g\gs f$; 
and if $f \ls g\ls f$, we then write $f\sim g$.

\section{On the functions $\Phi$, $\vp$ and $w$}\label{sec:Young}

In this section 
we state on the functions $\Phi$, $\vp$ and $w$ by which we define 
$\LPp(\R^n,w)$ and $\wLPp(\R^n,w)$.
We first recall the Young function and its generalization.

For an increasing (i.e. nondecreasing) function 
$\Phi:[0,\infty]\to[0,\infty]$,
let
\begin{equation*}\label{aP bP} 
 a(\Phi)=\sup\{t\ge0:\Phi(t)=0\}, \quad 
 b(\Phi)=\inf\{t\ge0:\Phi(t)=\infty\},
\end{equation*} 
with convention $\sup\emptyset=0$ and $\inf\emptyset=\infty$.
Then $0\le a(\Phi)\le b(\Phi)\le\infty$.

Let $\biP$ be the set of all increasing functions
$\Phi:[0,\infty]\to[0,\infty]$
such that
\begin{enumerate}
\item\label{ab} \ 
$\displaystyle
 0\le a(\Phi)<\infty, \quad 0<b(\Phi)\le\infty, 
$
\item \label{lim_0} \ 
$\displaystyle
 \lim_{t\to+0}\Phi(t)=\Phi(0)=0,
$
\item \label{left cont} \ 
$\displaystyle
 \text{$\Phi$ is left continuous on $[0,b(\Phi))$}, 
$
\item \label{left cont infty} \ 
$\displaystyle
 \text{if $b(\Phi)=\infty$, then } 
 \lim_{t\to\infty}\Phi(t)=\Phi(\infty)=\infty, 
$
\item \label{left cont b} \ 
$\displaystyle
 \text{if $b(\Phi)<\infty$, then } 
 \lim_{t\to b(\Phi)-0}\Phi(t)=\Phi(b(\Phi)) \ (\le\infty). 
$
\end{enumerate}

In what follows,
if an increasing and left continuous function $\Phi:[0,\infty)\to[0,\infty)$ satisfies
\ref{lim_0} and $\dlim_{t\to\infty}\Phi(t)=\infty$,
then we always regard that $\Phi(\infty)=\infty$ and that $\Phi\in\biP$.

For $\Phi, \Psi\in\biP$, 
we write $\Phi\approx\Psi$
if there exists a positive constant $C$ such that
\begin{equation*} 
     \Phi(C^{-1}t)\le\Psi(t)\le\Phi(Ct)
     \quad\text{for all}\ t\in[0,\infty].
\end{equation*} 

Now we recall the definition of the Young function and give its generalization.

\begin{defn}\label{defn:Young}
\begin{enumerate}
\item 
A function $\Phi\in\biP$ is called a Young function 
(or sometimes also called an Orlicz function) 
if $\Phi$ is convex on $[0,b(\Phi))$.
Let $\iPy$ be the set of all Young functions,
and let $\biPy$ be the set of all $\Phi\in\biP$ such that
$\Phi\approx\Psi$ for some $\Psi\in\iPy$.
(Each $\Phi\in\biPy$ is also called a quasi-convex function, see~\cite{Kokilashvili-Krbec1991}).
\item
Let $\cY$ be the set of all Young functions such that $a(\Phi)=0$ and $b(\Phi)=\infty$,
and let $\bcY$ be the set of all $\Phi\in\biP$ such that
$\Phi\approx\Psi$ for some $\Psi\in\cY$.
\end{enumerate}
\end{defn}

By the convexity, 
any Young function $\Phi$ is continuous on $[0,b(\Phi))$ 
and strictly increasing on $[a(\Phi),b(\Phi)]$.
Hence $\Phi$ is bijective from $[a(\Phi),b(\Phi)]$ to $[0,\Phi(b(\Phi))]$.
If $\Phi\in\cY$, then $\Phi$ is continuous and bijective from $[0,\infty]$ to itself.

\begin{defn}\label{defn:D2 n2}
\begin{enumerate}
\item 
A function $\Phi\in\biP$ is said to satisfy the $\Delta_2$-condition,
denoted by $\Phi\in\bdtwo$, 
if there exists a constant $C>0$ such that
\begin{equation*} 
 \Phi(2t)\le C\Phi(t) 
 \quad\text{for all } t>0.
\end{equation*}
\item
A function $\Phi\in\biP$ is said to satisfy the $\nabla_2$-condition,
denoted by $\Phi\in\bntwo$, 
if there exists a constant $k>1$ such that
\begin{equation*} 
 \Phi(t)\le\frac1{2k}\Phi(kt) 
 \quad\text{for all } t>0.
\end{equation*}
\item
Let $\Delta_2=\iPy\cap\bdtwo$ and $\nabla_2=\iPy\cap\bntwo$.
\end{enumerate}
\end{defn}

For $\Phi\in\biP$, 
we recall the dilation indices which are also called 
the Orlicz-Matuszewska-Maligranda indices:

\begin{defn}\label{defn:index}
For $\Phi\in\biP$ with $a(\Phi)=0$ and $b(\Phi)=\infty$,
let
\begin{equation*}
 h_{\Phi}(\lambda)=\sup_{t\in(0,\infty)}\frac{\Phi(\lambda t)}{\Phi(t)},
 \quad \lambda\in(0,\infty),
\end{equation*}
and define the lower and upper indices of $\Phi$ by
\begin{align*}
 i(\Phi)
 &=
 \lim_{\lambda\to+0}\frac{\log h_{\Phi}(\lambda)}{\log\lambda}
 =
 \sup_{\lambda\in(0,1)}\frac{\log h_{\Phi}(\lambda)}{\log\lambda},
\\
 I(\Phi)
 &=
 \lim_{\lambda\to\infty}\frac{\log h_{\Phi}(\lambda)}{\log\lambda}
 =
 \inf_{\lambda\in(1,\infty)}\frac{\log h_{\Phi}(\lambda)}{\log\lambda},
\end{align*}
respectively, with convention $\log\infty=\infty$.
\end{defn}

\begin{rem}\label{rem:index}
By the definition we see that $h_{\Phi}(1)=1$ 
and that $h_{\Phi}$ is increasing (i.e.\,non-decreasing) and submultiplicative
which means that
$h_{\Phi}(\lambda_1\lambda_2)\le h_{\Phi}(\lambda_1)h_{\Phi}(\lambda_2)$
for all $\lambda_1,\lambda_2\in(0,\infty)$.
In this case 
the above limits exist (permitting $\infty$)
and $0\le i(\Phi)\le I(\Phi)\le\infty$,
see \cite{Maligranda1985} for example.
If $\Phi\in\bdtwo$, then $a(\Phi)=0$ and $b(\Phi)=\infty$.
In this case $0<i(\Phi)\le I(\Phi)<\infty$,
see \cite{Gustavsson-Peetre1977,Maligranda1985} for example.
\end{rem}

\begin{rem}\label{rem:index 2}
Let $\Phi,\Psi\in\biP$ with $a(\Phi)=a(\Psi)=0$ and $b(\Phi)=b(\Psi)=\infty$.
\begin{enumerate}
\item 
If $\Phi\approx\Psi$, then $i(\Phi)=i(\Psi)$ and $I(\Phi)=I(\Psi)$.
\item 
If $\Phi\in\bcY$, then $1\le i(\Phi)\le I(\Phi)\le\infty$.
\item 
$\Phi\in\bntwo$ if and only if $1<i(\Phi)\le I(\Phi)\le\infty$.
\item 
$\Phi\in\bdtwo\cap\bntwo$ if and only if $1<i(\Phi)\le I(\Phi)<\infty$.
\item\label{item:bdtwo}
Let $\Phi\in\bcY$. Then $\Phi\in\bdtwo$ if and only if $1\le i(\Phi)\le I(\Phi)<\infty$.
\item
Let $0<i(\Phi)\le I(\Phi)<\infty$.
If $0<p<i(\Phi)\le I(\Phi)<q<\infty$, 
then there exists a positive constant $C$ such that,
for all $t,\lambda\in(0,\infty)$,
\begin{equation*}
 \Phi(\lambda t)
 \le
 C\max\left(\lambda^{p},\lambda^{q}\right)\Phi(t),
\end{equation*}
that is, 
$t\mapsto\dfrac{\Phi(t)}{t^p}$ is almost increasing
and
$t\mapsto\dfrac{\Phi(t)}{t^q}$ is almost decreasing.
\item\label{item:cY}
$\Phi\in\bcY$ if and only if 
$t\mapsto\dfrac{\Phi(t)}{t}$ is almost increasing
(\cite[Lemma~1.1.1]{Kokilashvili-Krbec1991}).
\end{enumerate}
\end{rem}

Next, 
we say that a function $\theta:\R^n\times(0,\infty)\to(0,\infty)$ 
satisfies the doubling condition if
there exists a positive constant $C$ such that,
for all $x\in\R^n$ and $r,s\in(0,\infty)$,
\begin{equation}\label{doubling}
 \frac1C\le\frac{\theta(x,r)}{\theta(x,s)}\le C,
 \quad\text{if} \ \ \frac12\le\frac{r}{s}\le2.
\end{equation}
We say that $\theta$ is almost increasing (resp. almost decreasing) if
there exists a positive constant $C$ such that, for all $x\in\R^n$ and $r,s\in(0,\infty)$,
\begin{equation*} 
 \theta(x,r)\le C\theta(x,s) \quad
 (\text{resp.}\ \theta(x,s)\le C\theta(x,r)),
 \quad\text{if $r<s$}.
\end{equation*}

For two functions $\theta,\kappa:\R^n\times(0,\infty)\to(0,\infty)$,
we write $\theta\sim\kappa$ if
there exists a positive constant $C$ such that,
for all $x\in\R^n$ and $r\in(0,\infty)$,
\begin{equation*}
 \frac1C\le\frac{\theta(x,r)}{\kappa(x,r)}\le C.
\end{equation*}

As same as Definition~\ref{defn:OM}
we also define $\LPp(\R^n,w)$ and $\wLPp(\R^n,w)$ 
by using generalized Young functions $\Phi\in\biPy$
together with $\|\cdot\|_{\Phi,\vp,w,B}$ and $\|\cdot\|_{\Phi,\vp,w,B,\weak}$, respectively.
Then $\|\cdot\|_{\LPp(\R^n,w)}$ and $\|\cdot\|_{\wLPp(\R^n,w)}$ are quasi norms
and thereby $\LPp(\R^n,w)$ and $\wLPp(\R^n,w)$ are quasi Banach spaces.

\begin{rem}\label{rem:approx norm}
Let $\Phi,\Psi\in\biPy$ and $\vp,\psi:\R^n\times(0,\infty)\to(0,\infty)$.
If $\Phi\approx\Psi$ and $\vp\sim\psi$,
then
$\LPp(\R^n,w)=\LPsps(\R^n,w)$ and $\wLPp(\R^n,w)=\wLPsps(\R^n,w)$
with equivalent quasi norms.
It is also known by \cite[Proposition~4.2]{Kawasumi-Nakai2020Hiroshima} that, 
for $\Phi\in\iPy$ and a measurable set $G$, 
\begin{equation}\label{weak type} 
 \sup_{t\in(0,\infty)}\Phi(t)w(G,f,t)
 =
 \sup_{t\in(0,\infty)}t\,w(G,\Phi(|f|),t).
\end{equation}
\end{rem}

In this paper we consider the following classes of $\vp$:
\begin{defn}\label{defn:cG}
For a weight $w$,
let $\cGdec_w$ be the set of all functions $\vp:\R^n\times(0,\infty)\to(0,\infty)$
such that 
$\vp$ is almost decreasing
and that
$r\mapsto\vp(x,r)w(B(x,r))$ is almost increasing.
That is,
there exists a positive constant $C$ such that, 
for all $x\in\R^n$ and $r,s\in(0,\infty)$,
\begin{equation*}
 C\vp(x,r)\ge \vp(x,s),
 \quad
 \vp(x,r)w(B(x,r))\le C\vp(x,s)w(B(x,s)), 
 \quad
 \text{if} \ r<s.
\end{equation*}
If $w(x)\equiv1$, we denote $\cGdec_w$ by $\cGdec$ simply.
\end{defn}

On the weights we consider the following Muckenhoupt $A_p$ classes:

\begin{defn}\label{defn:Ap}
For $p\in[1,\infty)$, 
let $A_p$ be the set of all weight functions $w$ such that
the following functional is finite: 
\begin{align*}
 [w]_{A_1}
 &=
 \sup_B\left(\frac1{|B|}\int_B w(x)\,dx\right)
       \|w^{-1}\|_{\Li(B)},
 &\quad\text{if} \ p=1,
\\
 [w]_{A_p}
 &=
 \sup_B\left(\frac1{|B|}\int_B w(x)\,dx\right)
       \left(\frac1{|B|}\int_B w(x)^{-1/(p-1)}\,dx\right)^{p-1},
 &\quad\text{if} \  p\in(1,\infty),
\end{align*}
where the suprema are taken over all balls $B$ in $\R^n$.
Let
\begin{equation*}
 A_{\infty}=\bigcup_{p\in[1,\infty)}A_p.
\end{equation*}
\end{defn}

Then the following properties are known:
Let $w$ is a weight.
Then $w\in A_{\infty}$
if and only if 
there exist positive constants $\delta$ and $C$ such that,
for any ball $B$ and its subset $E$,
\begin{equation}\label{delta}
 \frac{w(E)}{w(B)}\le C\left(\frac{|E|}{|B|}\right)^{\delta}.
\end{equation}
If $1\le p<q\le\infty$, then $A_p\subset A_q$.
Let $p\in(1,\infty)$. If $w\in A_p$, then $w\in A_r$ for some $r\in[1,p)$.

Let $w\in A_p$ for some $p\in[1,\infty)$. 
Then, for any ball $B$,
\begin{equation}\label{fint B}
 \left(\frac1{|B|}\int_B|f(x)|\,dx\right)^p
 \le
 [w]_{A_p}
 \frac1{w(B)}\int_B|f(x)|^p w(x)\,dx.
\end{equation}
Moreover, there exists a positive constant $C$ such that,
for any ball $B$ and $k\in(1,\infty)$,
\begin{equation}\label{w kB}
 w(kB)\le C k^{np}[w]_{A_p}w(B).
\end{equation}

If $w\in A_p$ for some $p\in[1,\infty)$ and $\vp\in\cGdec_w$, 
then $\vp$ satisfies the doubling condition \eqref{doubling},
since $w$ satisfies \eqref{w kB}.

For the properties of $A_p$-weights, 
see \cite{GarciaCuerva-RubiodeFrancia1985,Grafakos2014GTM249}
for example.

\section{Main results}\label{sec:main}

The Hardy-Littlewood maximal operator is defined by
\begin{equation*}
 Mf(x)=\sup_{B\ni x}\frac1{|B|}\int_B |f(y)|\,dy,
\end{equation*}
for locally integrable functions $f$,
where the supremum is taken over all balls $B$ containing $x$.
It is known that, if $\Phi\in\biPy$ and $\vp\in\cGdec$,
then the Hardy-Littlewood maximal operator $M$ is bounded 
from $\LPp(\R^n)$ to $\wLPp(\R^n)$.
Moreover, if $\Phi\in\bntwo$,
then $M$ is bounded 
from $\LPp(\R^n)$ to itself and 
from $\wLPp(\R^n)$ to itself,
see \cite{Kawasumi-Nakai-Shi2021MathNachr,Nakai2008Studia}.


Next we state known results for the boundedness of the Calder\'on-Zygmund operator.
First we recall its definition following \cite{Yabuta1985}.
Let $\cS(\R^n)$ be the set of all Schwartz functions on $\R^n$
and $\cS'(\R^n)$ be the dual spaces of $\cS(\R^n)$.
Let $\Omega$ be the set of all increasing functions $\omega:(0,\infty)\to(0,\infty)$
such that 
$\int_0^1\frac{\omega(t)}{t}dt<\infty$. 

\begin{defn}[{standard kernel}]\label{defn:Kernel}
Let $\omega\in \Omega$. 
A continuous function $K(x,y)$ on $\R^n\times\R^n\setminus\{(x,x)\in\R^{2n}\}$
is said to be a standard kernel of type $\omega$ 
if the following conditions are satisfied;
\begin{gather*} 
     |K(x,y)|\le \frac{C}{|x-y|^n} 
     \quad\text{for}\quad x\not=y,
\\     
  \begin{split}
     |K(x,y)-K(x,z)|+|K(y,x)-K(z,x)|
     \le 
     \frac{C}{|x-y|^{n}} 
     \,\omega\!\left(\frac{|y-z|}{|x-y|}\right) & \\
     \text{for}\quad  2|y-z|<|x-y|. &
  \end{split}
\end{gather*} 
\end{defn}

\begin{defn}[{Calder\'on-Zygmund operator}]\label{defn:CZO}
Let $\omega\in \Omega$.
A linear operator $T$ from $\cS(\R^n)$ to $\cS'(\R^n)$
is said to be a Calder\'on-Zygmund operator of type $\omega$,
if $T$ is bounded on $L^2(\R^n)$
and there exists a standard kernel $K$ of type $\omega$ such that,
for $f\in \Cic(\R^n)$,
\begin{equation} 
     Tf(x)=\int_{\R^n} K(x,y)f(y)\,dy, \quad x\notin\supp f.
                                                  \label{CZ3}
\end{equation} 
\end{defn}

\begin{rem}\label{rem:CZO}
If $x\notin\supp f$, then $K(x,y)$ is bounded on $\supp f$ with respect to $y$.
Therefore,
if \eqref{CZ3} holds for $f\in \Cic(\R^n)$,
then \eqref{CZ3} holds for $f\in L^1_{\comp}(\R^n)$.
\end{rem}

It is known by \cite{Yabuta1985} that 
any Calder\'on-Zygmund operator of type $\omega\in \Omega$
is bounded on $L^p(\R^n)$ for $1<p<\infty$.
This result was extended to Orlicz-Morrey spaces $\LPp(\R^n)$
by \cite{Nakai2008KIT}
as the following:
Let $\vp:(0,\infty)\to(0,\infty)$.
Assume that $\vp\in\cGdec$ 
and that
there exists a positive constant $C$ such that,
for all $r\in(0,\infty)$,
\begin{equation*} 
 \int_r^{\infty}\frac{\vp(t)}{t}\,dt\le C\vp(r).
\end{equation*}
Let $\Phi\in\Delta_2\cap\nabla_2$.
For $f\in\LPp(\R^n)$, we define $Tf$ on each ball $B$ by
\begin{equation*} 
 Tf(x)=T(f\chi_{2B})(x)+\int_{\R^n\setminus 2B}K(x,y)f(y)\,dy,
 \quad x\in B.
\end{equation*}
Then the first term in the right hand side 
is well defined, 
since $f\chi_{2B}\in\LP_{\comp}(\R^n)\subset L^1_{\comp}(\R^n)$,
and the integral of the second term converges absolutely.
Moreover, 
$Tf(x)$ is independent of the choice of the ball $B$ containing $x$.
By this definition we can show that $T$ is a bounded operator from $\LPp(\R^n)$ to itself.
For the weighted boundedness, it is also known by \cite{Yabuta1985} that, 
if $w\in A_1$, 
then $T$ is bounded from $L^1(\R^n,w)$ to $\wL^1(\R^n,w)$,
and, if $w\in A_p$, $1<p<\infty$, 
then $T$ is bounded from $L^p(\R^n,w)$ to itself.

In this paper we extend the above results to the weighted Orlicz-Morrey space and
its weak version.
As a corollary we also get the boundedness of $T$ from $\wL^p(\R^n,w)$ to itself
if $w\in A_p$, $1<p<\infty$.
The main result is the following:

\begin{thm}\label{thm:M CZO}
Let $M$ be the Hardy-Littlewood maximal operator,
and let $T$ be a Calder\'on-Zygmund operator of type $\omega\in\Omega$.
Let $\Phi\in\bcY$, $w\in A_{i(\Phi)}$ and $\vp\in\cGdec_w$.
\begin{enumerate}
\item 
If $i(\Phi)=1$, then
$M$ is bounded from $\LPp(\R^n,w)$ to $\wLPp(\R^n,w)$. 
If $1<i(\Phi)\le\infty$, then
$M$ is bounded from $\LPp(\R^n,w)$ to itself and from $\wLPp(\R^n,w)$ to itself. 
\item
Assume that 
there exists a positive constant $C$ such that,
for all $x\in\R^n$ and $r\in(0,\infty)$,
\begin{equation}\label{int vp x}
 \int_r^{\infty}\frac{\vp(x,t)}{t}\,dt\le C\vp(x,r).
\end{equation}
If $i(\Phi)=1\le I(\Phi)<\infty$, then
$T$ is bounded from $\LPp(\R^n,w)$ to $\wLPp(\R^n,w)$. 
If $1<i(\Phi)\le I(\Phi)<\infty$, then
$T$ is bounded from $\LPp(\R^n,w)$ to itself and from $\wLPp(\R^n,w)$ to itself. 
\end{enumerate}
\end{thm}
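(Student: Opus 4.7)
The plan is to derive the norm inequalities~\eqref{w}, \eqref{s}, \eqref{ww} from the three modular inequalities listed in Section~\ref{sec:intro}, the third of which is to be established in Section~\ref{sec:modular}, via the standard ball-by-ball splitting argument used for Morrey-type spaces.

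Fix a ball $B$ and decompose $f = f_1 + f_2$ with $f_1 = f\chi_{2B}$, $f_2 = f - f_1$. Write $S$ for either $M$ or a Calder\'on--Zygmund operator $T$. For the local piece, applying the relevant modular inequality directly to $Sf_1$ on $\R^n$ gives
\begin{equation*}
 \int_{\R^n} \Phi(|Sf_1(x)|/\lambda)\,w(x)\,dx \ls \int_{2B} \Phi(c|f(x)|/\lambda)\,w(x)\,dx,
\end{equation*}
with weak analogues in the weak-strong and weak-weak cases. The admissibility of $w$ follows from $w \in A_{i(\Phi)}$; for $T$ we additionally use $I(\Phi) < \infty$, which by Remark~\ref{rem:index 2}\ref{item:bdtwo} places $\Phi$ in $\bdtwo$. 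Since $w$ is doubling by \eqref{w kB} and $\vp \in \cGdec_w$ gives $\vp(2B)w(2B) \sim \vp(B)w(B)$, dividing by $\vp(B)w(B)$ and comparing with Definition~\ref{defn:OM} (using \eqref{weak type} in the weak cases) yields $\|Sf_1\|_{\Phi,\vp,w,B} \ls \|f\|_{\LPp(\R^n,w)}$, and similarly with weak quasinorms on both sides.

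For the nonlocal piece, the pointwise bound
\begin{equation*}
 M(f_2)(x) + |T(f_2)(x)| \ls \sum_{k=1}^{\infty} \frac{1}{|2^{k+1}B|}\int_{2^{k+1}B}|f(y)|\,dy, \qquad x \in B,
\end{equation*}
holds for $M$ by definition and for $T$ via the size bound on the standard kernel. Since $w \in A_{i(\Phi)}$, one can choose $p \in [1,i(\Phi))$ (or $p = 1$ if $i(\Phi) = 1$) with $w \in A_p$; inequality \eqref{fint B} converts each unweighted average into a weighted one, and Remark~\ref{rem:index 2}\ref{item:cY} together with Definition~\ref{defn:OM} bounds the $k$-th term by $\Phi^{-1}(1/(\vp(2^{k+1}B)w(2^{k+1}B)))$ times the relevant (weak) Orlicz-Morrey quasinorm of $f$. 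Summing on $k$ is geometric for $M$ in the strong regime $i(\Phi) > 1$ (using $\vp \in \cGdec_w$ and doubling), while for $T$ the decay is provided by the hypothesis \eqref{int vp x}; dividing by $\vp(B)w(B)$ and taking the supremum over $B$ finishes both parts.

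The principal obstacle is the weak-weak estimate \eqref{ww}. Even granting the new weak-weak modular inequality of Section~\ref{sec:modular} for the local piece, the nonlocal pointwise bound is inherently of strong type (an $L^1$-average of $|f|$), so one must still control $\bigl(\frac{1}{w(Q)}\int_Q |f|^p w\,dy\bigr)^{1/p}$ by the weak Orlicz-Morrey quasinorm of $f$ on $Q$. A layer-cake computation accomplishes this only when $p > 1$, which explains precisely why the weak-weak conclusions of both (i) and (ii) require $i(\Phi) > 1$; reconciling this strong-type nonlocal bound with the weak-type local modular inequality is the technically most delicate step in assembling the proof.
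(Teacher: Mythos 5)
Your overall architecture coincides with the paper's: the ball-by-ball splitting $f=f_1+f_2$ with $f_1=f\chi_{2B}$, the modular inequalities of Section~\ref{sec:modular} (including the new weak--weak one) for the local piece combined with doubling of $\vp(B)w(B)$, and, for the nonlocal piece, conversion of unweighted averages into weighted ones via \eqref{fint B} followed by H\"older/layer-cake control against the (weak) Orlicz--Morrey quasinorm. You also correctly isolate the key difficulty: an $L^q$-average of $f$ can be dominated by $\Phi^{-1}(\vp(B))$ times the \emph{weak} quasinorm only when $q$ sits strictly below an exponent $p$ for which $\Phi(t)/t^p$ is almost increasing (this is \eqref{fint q}), which is exactly why the weak--weak conclusions require $i(\Phi)>1$.

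There is, however, a genuine gap in your nonlocal estimate for $M$. You bound $M(f_2)(x)$ by the full series $\sum_k |2^{k+1}B|^{-1}\int_{2^{k+1}B}|f|$ and assert that the series converges geometrically ``using $\vp\in\cGdec_w$ and doubling.'' That is false: each term is comparable to $\Phi^{-1}(\vp(2^{k+1}B))$ times the quasinorm of $f$, and membership in $\cGdec_w$ gives no decay of $\Phi^{-1}(\vp(2^{k}B))$ in $k$ --- take $\vp\equiv1$, which lies in $\cGdec_w$, and $f$ a nonzero constant: every term equals the same positive number and the sum diverges, while $M(f_2)$ is bounded. Part (i) of the theorem imposes no integral condition on $\vp$, so no summability is available; the hypothesis \eqref{int vp x} is precisely what rescues the series for $T$ (via Lemma~\ref{lem:int Phi vp}), and for $M$ one must instead estimate the supremum defining $M(f_2)(x)$ term by term, as in Lemma~\ref{lem:Mf2}: any ball $B'\ni x$ meeting $(2B)^{\complement}$ has radius $\gs r$, its average of $|f_2|$ is $\ls\Phi^{-1}(\vp(B''))$ for a concentric enlargement $B''\supset B$, and almost decreasingness of $\vp$ gives $\ls\Phi^{-1}(\vp(B))$ with no summation at all. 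Two smaller points: the factor in your $k$-th term should be $\Phi^{-1}(\vp(2^{k+1}B))$, not $\Phi^{-1}(1/(\vp(2^{k+1}B)w(2^{k+1}B)))$ (by \eqref{chi norm B} the normalized measure $\mu_B$ satisfies $\mu_B(B)=1/\vp(B)$), and with your factor neither the comparison with \eqref{int vp x} nor the final division by $\vp(B)w(B)$ would match up; and in part (ii) you must still define $Tf$ on $\LPp(\R^n,w)$ via the kernel on each ball and verify that the definition is independent of the ball chosen, which occupies the last step of the paper's argument.
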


Ho~\cite{Ho2013} proved the boundedness of $M$ on $\LPp(\R^n,w)$ under stronger conditions.
He treated the vector valued inequality.

To prove Theorem~\ref{thm:M CZO}
we need the modular inequalities 
for which the assumption $w\in A_{i(\Phi)}$ is necessary,
see Corollary~\ref{cor:modular}.

From the theorem above,
for the operators $M$ and $T$, we get the following corollaries immediately:

\begin{cor}\label{cor:M CZO Orl}
Let $\Phi\in\bcY$, $w\in A_{i(\Phi)}$ and $\vp\in\cGdec_w$.
\begin{enumerate}
\item 
If $i(\Phi)=1$, then
$M$ is bounded from $\LP(\R^n,w)$ to $\wLP(\R^n,w)$. 
If $1<i(\Phi)\le\infty$, then
$M$ is bounded from $\LP(\R^n,w)$ to itself and from $\wLP(\R^n,w)$ to itself. 
\item
Assume that $\vp$ satisfies \eqref{int vp x}.
If $i(\Phi)=1\le I(\Phi)<\infty$, then
$T$ is bounded from $\LP(\R^n,w)$ to $\wLP(\R^n,w)$. 
If $1<i(\Phi)\le I(\Phi)<\infty$, then
$T$ is bounded from $\LP(\R^n,w)$ to itself and from $\wLP(\R^n,w)$ to itself. 
\end{enumerate}
\end{cor}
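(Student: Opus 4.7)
The plan is to obtain Corollary~\ref{cor:M CZO Orl} as an immediate consequence of Theorem~\ref{thm:M CZO} by specializing the growth function $\vp$. As noted in the introduction, the choice $\vp(B)=1/w(B)$ makes $\LPp(\R^n,w)$ coincide with the weighted Orlicz space $\LP(\R^n,w)$, and analogously for the weak versions, so it is enough to check that this particular $\vp$ satisfies the hypotheses of Theorem~\ref{thm:M CZO}.

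First I would verify that $\vp(x,r):=1/w(B(x,r))$ lies in $\cGdec_w$. Since $B(x,r)\subset B(x,s)$ whenever $r<s$, one has $w(B(x,r))\le w(B(x,s))$, so $\vp(x,r)\ge\vp(x,s)$, i.e.\ $\vp$ is decreasing in $r$. The second requirement in Definition~\ref{defn:cG} holds tautologically because $\vp(x,r)w(B(x,r))\equiv 1$ is constant in $r$. With the standing assumption $\Phi\in\bcY$ and $w\in A_{i(\Phi)}$, Theorem~\ref{thm:M CZO}(i) applies directly and yields part (i) of the corollary.

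For part (ii) I would additionally verify the Dini-type bound \eqref{int vp x}. Since $w\in A_{i(\Phi)}\subset A_{\infty}$, property \eqref{delta} applied to the inclusion $B(x,r)\subset B(x,2^jr)$ produces constants $C,\delta>0$ with $w(B(x,r))/w(B(x,2^jr))\le C\,2^{-jn\delta}$ for every integer $j\ge0$. Splitting the integral dyadically,
\begin{equation*}
 \int_r^{\infty}\frac{\vp(x,t)}{t}\,dt
 =\sum_{j=0}^{\infty}\int_{2^jr}^{2^{j+1}r}\frac{dt}{t\,w(B(x,t))}
 \le (\log 2)\sum_{j=0}^{\infty}\frac{1}{w(B(x,2^jr))}
 \le \frac{C\log 2}{w(B(x,r))}\sum_{j=0}^{\infty}2^{-jn\delta},
\end{equation*}
and since the geometric series converges the bound \eqref{int vp x} holds with $\vp(x,r)=1/w(B(x,r))$. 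An appeal to Theorem~\ref{thm:M CZO}(ii) then delivers part (ii).

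The only step that is not entirely formal is the dyadic summation above, which leans on the reverse-doubling property \eqref{delta} of $A_{\infty}$ weights. The verification that $\vp\in\cGdec_w$ and the invocation of Theorem~\ref{thm:M CZO} are routine, so I do not anticipate any genuine obstacle in carrying out this plan.
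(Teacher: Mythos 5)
Your proposal is correct and matches the paper's (implicit) argument: the paper states this corollary as an immediate specialization of Theorem~\ref{thm:M CZO} to $\vp(B)=1/w(B)$, and the verification of \eqref{int vp x} for this $\vp$ via \eqref{delta} is exactly the observation the paper records just before Corollary~\ref{cor:M CZO Mk} (the case $\kappa=0$ of $\vp(B)=w(B)^{\kappa-1}$). Your checks that this $\vp$ lies in $\cGdec_w$ and your dyadic summation are both sound, so nothing is missing.
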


\begin{cor}\label{cor:M CZO Mor}
Let $p\in[1,\infty)$, $w\in A_p$ and $\vp\in\cGdec_w$.
\begin{enumerate}
\item 
If $p=1$, then
$M$ is bounded from $L^{(1,\vp)}(\R^n,w)$ to $\wL^{(1,\vp)}(\R^n,w)$. 
If $1<p<\infty$, then
$M$ is bounded from $L^{(p,\vp)}(\R^n,w)$ to itself and from $\wL^{(p,\vp)}(\R^n,w)$ to itself. 
\item
Assume that $\vp$ satisfies \eqref{int vp x}.
Then $T$ has the same boundedness as $M$.
\end{enumerate}
\end{cor}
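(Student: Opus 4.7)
The plan is to deduce this corollary directly from Theorem~\ref{thm:M CZO} by specializing the Young function to $\Phi(t)=t^p$. First I would verify that $\Phi(t)=t^p$ lies in $\bcY$ (in fact in $\cY$) for $p\in[1,\infty)$: it is convex, continuous, bijective from $[0,\infty]$ to itself, with $a(\Phi)=0$ and $b(\Phi)=\infty$. Next I would compute the dilation function $h_\Phi(\lambda)=\sup_{t>0}(\lambda t)^p/t^p=\lambda^p$, which yields $\log h_\Phi(\lambda)/\log\lambda\equiv p$ and therefore $i(\Phi)=I(\Phi)=p$.

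With these identifications, the hypotheses $w\in A_p$ and $\vp\in\cGdec_w$ of the corollary coincide exactly with the hypotheses $w\in A_{i(\Phi)}$ and $\vp\in\cGdec_w$ of Theorem~\ref{thm:M CZO}. By the definition recalled in Section~\ref{sec:intro}, $\LPp(\R^n,w)=L^{(p,\vp)}(\R^n,w)$ and $\wLPp(\R^n,w)=\wL^{(p,\vp)}(\R^n,w)$ when $\Phi(t)=t^p$. Thus the two assertions of the corollary are obtained by reading off the corresponding assertions of the theorem:

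For (i), when $p=1$ we have $i(\Phi)=1$ and Theorem~\ref{thm:M CZO}(i) gives the boundedness of $M$ from $L^{(1,\vp)}(\R^n,w)$ into $\wL^{(1,\vp)}(\R^n,w)$; when $1<p<\infty$ we have $1<i(\Phi)=p<\infty$, so the same part of the theorem gives the strong boundedness on $L^{(p,\vp)}(\R^n,w)$ and the boundedness on $\wL^{(p,\vp)}(\R^n,w)$.

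For (ii), when $p=1$ we have $i(\Phi)=1\le I(\Phi)=1<\infty$, so under \eqref{int vp x} Theorem~\ref{thm:M CZO}(ii) gives boundedness of $T$ from $L^{(1,\vp)}(\R^n,w)$ into $\wL^{(1,\vp)}(\R^n,w)$; when $1<p<\infty$ the indices satisfy $1<i(\Phi)=I(\Phi)=p<\infty$, which is precisely the condition of the second case in Theorem~\ref{thm:M CZO}(ii), yielding the strong and weak boundedness on $L^{(p,\vp)}(\R^n,w)$ and $\wL^{(p,\vp)}(\R^n,w)$ respectively. There is no serious obstacle here; the only thing to be careful about is the bookkeeping that the conditions $i(\Phi)=1$ and $1<i(\Phi)$ in the theorem exactly match the dichotomy $p=1$ versus $p>1$ in the corollary.
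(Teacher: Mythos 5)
Your proposal is correct and matches the paper's intent exactly: the paper states this corollary as an immediate specialization of Theorem~\ref{thm:M CZO} to $\Phi(t)=t^p$, and your computation $h_\Phi(\lambda)=\lambda^p$, hence $i(\Phi)=I(\Phi)=p$, together with the identification $\LPp(\R^n,w)=L^{(p,\vp)}(\R^n,w)$, is precisely the bookkeeping needed. Nothing is missing.
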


Let $w\in A_p$ for some $p\in[1,\infty)$.
If $\vp(B)=w(B)^{\kappa-1}$ for some $\kappa\in[0,1)$,
then $\vp(kB)\ls k^{-n\delta(1-\kappa)}\vp(B)$ for some $\delta>0$ and all $k\ge1$ by \eqref{delta}.
Hence, $\vp$ satisfies \eqref{int vp x}.
Then we also have the following corollary:

\begin{cor}\label{cor:M CZO Mk}
If $w\in A_1$ and $\vp\in\cGdec_w$,
then
both $M$ and $T$ are bounded from $L^{1,\kappa}(\R^n,w)$ to $\wL^{1,\kappa}(\R^n,w)$. 
If $1<p<\infty$, $w\in A_p$ and $\vp\in\cGdec_w$, then
both $M$ and $T$ are bounded from $L^{p,\kappa}(\R^n,w)$ to itself and from $\wL^{p,\kappa}(\R^n,w)$ to itself. 
\end{cor}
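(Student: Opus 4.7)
The plan is to observe that $L^{p,\kappa}(\R^n,w)$ and $\wL^{p,\kappa}(\R^n,w)$ are exactly the spaces $L^{(p,\vp)}(\R^n,w)$ and $\wL^{(p,\vp)}(\R^n,w)$ corresponding to the specific choice $\vp(x,r)=w(B(x,r))^{\kappa-1}$, so the corollary will follow from Corollary~\ref{cor:M CZO Mor} once the two hypotheses $\vp\in\cGdec_w$ and \eqref{int vp x} are verified for this particular $\vp$.

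Checking $\vp\in\cGdec_w$ is immediate from Definition~\ref{defn:cG}: the map $r\mapsto w(B(x,r))$ is non-decreasing, so since $\kappa-1<0$ the function $r\mapsto w(B(x,r))^{\kappa-1}$ is non-increasing (in particular almost decreasing), while $\vp(x,r)\,w(B(x,r))=w(B(x,r))^{\kappa}$ is non-decreasing (in particular almost increasing). The second hypothesis \eqref{int vp x} is essentially asserted in the paragraph immediately preceding the corollary: from $w\in A_p\subset A_{\infty}$ and \eqref{delta} one obtains the uniform decay $\vp(x,kr)\ls k^{-n\delta(1-\kappa)}\vp(x,r)$ for all $k\ge1$. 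Splitting the integral $\int_r^{\infty}\vp(x,t)/t\,dt$ into dyadic annuli $[2^j r,2^{j+1}r]$ and estimating $\vp$ on each piece by its value at the left endpoint gives
\[
 \int_r^{\infty}\frac{\vp(x,t)}{t}\,dt
 \ls \sum_{j=0}^{\infty}\vp(x,2^j r)
 \ls \sum_{j=0}^{\infty}2^{-jn\delta(1-\kappa)}\vp(x,r),
\]
a geometric series (since $1-\kappa>0$) whose sum is a constant multiple of $\vp(x,r)$, which is exactly \eqref{int vp x}.

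With both hypotheses verified I would simply invoke Corollary~\ref{cor:M CZO Mor}: for $p=1$, part (i) yields the weak-type $L^{1,\kappa}\to\wL^{1,\kappa}$ boundedness of $M$ and part (ii) (applied with $\Phi(t)=t$, so that $i(\Phi)=I(\Phi)=1$) gives the same conclusion for $T$; for $1<p<\infty$, parts (i) and (ii) yield both the strong boundedness on $L^{p,\kappa}(\R^n,w)$ and the boundedness on $\wL^{p,\kappa}(\R^n,w)$ for $M$ and for $T$. There is no substantive obstacle here; the corollary is a bookkeeping matter, and the only genuine content is the observation that the $A_{\infty}$ decay \eqref{delta} converts the algebraic relation $\vp(B)=w(B)^{\kappa-1}$ into the Dini-type integral bound \eqref{int vp x}.
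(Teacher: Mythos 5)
Your proposal is correct and follows exactly the paper's route: identify $L^{p,\kappa}(\R^n,w)$ with $L^{(p,\vp)}(\R^n,w)$ for $\vp(B)=w(B)^{\kappa-1}$, check that this $\vp$ lies in $\cGdec_w$ and satisfies \eqref{int vp x} via the $A_\infty$ decay \eqref{delta}, and invoke Corollary~\ref{cor:M CZO Mor}. The paper's own justification is precisely the paragraph preceding the corollary; you merely spell out the dyadic-annulus verification of \eqref{int vp x} that the paper leaves implicit.
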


\section{Properties on Young functions}\label{sec:prop}

In this section we state the properties of Young functions and their generalization. 
For the theory of Orlicz spaces, 
see \cite{Kita2009,Krasnoselsky-Rutitsky1961,Maligranda1989} for example.

For $\Phi\in\biP$,
we recall the generalized inverse of $\Phi$
in the sense of O'Neil \cite[Definition~1.2]{ONeil1965}.

\begin{defn}\label{defn:ginverse}
For $\Phi\in\biP$ and $u\in[0,\infty]$, let
\begin{equation*} 
 \Phi^{-1}(u)
 = 
\begin{cases}
 \inf\{t\ge0: \Phi(t)>u\}, & u\in[0,\infty), \\
 \infty, & u=\infty.
\end{cases}
\end{equation*}
\end{defn}

Let $\Phi\in\biP$. 
Then $\Phi^{-1}$ is finite, increasing and right continuous on $[0,\infty)$
and positive on $(0,\infty)$.
If $\Phi$ is bijective from $[0,\infty]$ to itself,
then $\Phi^{-1}$ is the usual inverse function of $\Phi$.
In general, if $\Phi\in\biP$, then
\begin{equation*} 
 \Phi(\Phi^{-1}(u)) \le u \le  \Phi^{-1}(\Phi(u))
 \quad\text{for all $u\in[0,\infty]$},
\end{equation*}
which is a generalization of Property 1.3 in \cite{ONeil1965},
see \cite[Proposition~2.2]{Shi-Arai-Nakai2019Taiwan}.
Let $\Phi, \Psi\in\biP$.
Then
\begin{equation*} 
     \Phi(C^{-1}t)\le\Psi(t)\le\Phi(Ct)
     \quad\text{for all}\ t\in[0,\infty],
\end{equation*} 
if and only if 
\begin{equation*} 
     C^{-1}\Phi^{-1}(t)\le\Psi^{-1}(t)\le C\Phi^{-1}(t)
     \quad\text{for all}\ t\in[0,\infty],
\end{equation*} 
see \cite[Lemma~2.3]{Shi-Arai-Nakai2019Taiwan}.
That is,
$\Phi\approx\Psi$ if and only if $\Phi^{-1}\sim\Psi^{-1}$.

\begin{defn}\label{defn:complem}
For a Young function $\Phi$, 
its complementary function is defined by
\begin{equation*}
\cPhi(t)= 
\begin{cases}
   \sup\{tu-\Phi(u):u\in[0,\infty)\}, & t\in[0,\infty), \\
   \infty, & t=\infty.
 \end{cases}
\end{equation*}
\end{defn}

Then $\cPhi$ is also a Young function,
and $(\Phi,\cPhi)$ is called a complementary pair.
For example, 
if $\Phi(t)=t^p/p$, then $\cPhi(t)=t^{p'}/p'$
for $p,p'\in(1,\infty)$ and $1/p+1/p'=1$.
If $\Phi(t)=t$, then
\begin{equation*}
 \cPhi(t)=
\begin{cases}
 0, & t\in[0,1], \\
 \infty, & t\in(1,\infty].
\end{cases}
\end{equation*}
Namely, $\cPhi$ is not necessary in $\cY$ even if $\Phi\in\cY$.

Let $(\Phi,\cPhi)$ be a complementary pair of Young functions. 
Then the following inequality holds (\cite[(1.3)]{Torchinsky1976}):
\begin{equation}\label{Phi cPhi r}
 t\le\Phi^{-1}(t) \cPhi^{-1}(t)\le2t
 \quad\text{for}\quad t\in[0,\infty].
\end{equation}

Let $\Phi$ be a Young function and $(X,\mu)$ a measure space, 
and let $\LP(X,\mu)$ be the Orlicz space with the norm $\|\cdot\|_{\LP(X,\,\mu)}$.
Then a simple calculation shows that,
for any measurable subset $G\subset X$ with $\mu(G)>0$,
\begin{equation}\label{chi Orlicz norm}
 \|\chi_G\|_{\LP(X,\,\mu)}
 =
 \frac1{\Phi^{-1}(1/\mu(G))}.
\end{equation}
Let $(\Phi,\cPhi)$ be a complementary pair of Young functions. 
Then the following generalized H\"older's inequality holds (see \cite{ONeil1965}):
\begin{equation}\label{g Holder}
 \int_{X} |f(x)g(x)| \,d\mu(x) \le 2\|f\|_{\LP(X,\,\mu)} \|g\|_{\LcP(X,\,\mu)}.
\end{equation}

Let $\Phi\in\iPy$, $\vp:\R^n\times(0,\infty)\to(0,\infty)$ and $B=B(a,r)\subset\R^n$,
and let $\mu_B=w\,dx/(\vp(B)w(B))$.
Then 
by the definition of $\|\cdot\|_{\Phi,\vp,w,B}$ and \eqref{chi Orlicz norm}
we have
\begin{equation}\label{chi norm B}
 \|\chi_B\|_{\Phi,\vp,w,B}
 =
 \|\chi_B\|_{\LP(B,\mu_B)}
 =
 \frac1{\Phi^{-1}(1/\mu_B(B))}
 =
 \frac1{\Phi^{-1}(\vp(B))}.
\end{equation}
Moreover, by \eqref{g Holder} we have
\begin{equation}\label{g Holder B}
 \frac1{\vp(B)w(B)}\int_{B} |f(x)g(x)| w(x)\,dx \le 2\|f\|_{\Phi,\vp,w,B} \|g\|_{\cPhi,\vp,w,B}.
\end{equation}

Here we show the following lemma:

\begin{lem}\label{lem:p}
Let $w$ be a weight, $\Phi\in\bcY$ and $\vp:\R^n\times(0,\infty)\to(0,\infty)$.
Then there exists a positive constant $C$ such that, for all balls $B$,
\begin{equation}\label{fint 1}
 \frac1{w(B)}\int_B |f(x)|w(x)\,dx
 \le
 C\Phi^{-1}(\vp(B))\|f\|_{\Phi,\vp,w,B}.
\end{equation}
Moreover, 
assume that $t\mapsto\Phi(t)/t^p$ is almost increasing for some $p\in(1,\infty)$.
Then
there exists a positive constant $C_p$ such that
\begin{equation}\label{fint p}
 \left(\frac1{w(B)}\int_{B} |f(x)|^p w(x)\,dy\right)^{1/p}
 \le
 C_p\Phi^{-1}(\vp(B)) \|f\|_{\Phi,\vp,w,B},
\end{equation}
and, for all $q\in[1,p)$,
there exists a positive constant $C_{p,q}$ such that
\begin{equation}\label{fint q}
 \left(\frac1{w(B)}\int_{B} |f(x)|^q w(x)\,dy\right)^{1/q}
 \le
 C_{p,q}\Phi^{-1}(\vp(B)) \|f\|_{\Phi,\vp,w,B,\weak}.
\end{equation}
\end{lem}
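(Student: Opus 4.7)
The plan is to treat the three inequalities in order, using standard Young-function machinery after first reducing to a convex setting. Since $\Phi\in\bcY$, there exists a Young function $\Psi\in\cY$ with $\Psi\approx\Phi$; by Remark~\ref{rem:approx norm} one has $\|\cdot\|_{\Phi,\vp,w,B}\sim\|\cdot\|_{\Psi,\vp,w,B}$ (and the same for the weak norm) while $\Phi^{-1}\sim\Psi^{-1}$. All three claimed inequalities are therefore equivalent for $\Phi$ and $\Psi$ up to multiplicative constants, so I may assume throughout that $\Phi\in\cY$; in particular $\Phi$ is continuous and bijective from $[0,\infty]$ to itself, and $\Phi(\Phi^{-1}(u))=u$.

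For \eqref{fint 1}, I would apply the generalized H\"older inequality \eqref{g Holder B} with the second factor equal to $\chi_B$. By \eqref{chi norm B}, $\|\chi_B\|_{\cPhi,\vp,w,B}=1/\cPhi^{-1}(\vp(B))$, and \eqref{Phi cPhi r} delivers $\vp(B)/\cPhi^{-1}(\vp(B))\le 2\Phi^{-1}(\vp(B))$. Multiplying \eqref{g Holder B} by $\vp(B)$ then yields \eqref{fint 1}.

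For \eqref{fint p}, I would set $\Phi_p(s):=\Phi(s^{1/p})$. The hypothesis that $\Phi(t)/t^p$ is almost increasing rewrites, via $s=t^p$, as $\Phi_p(s)/s$ being almost increasing, so by Remark~\ref{rem:index 2}\ref{item:cY} one has $\Phi_p\in\bcY$. An elementary computation from Definition~\ref{defn:ginverse} gives $\Phi_p^{-1}(u)=\Phi^{-1}(u)^p$, and unpacking Definition~\ref{defn:OM} yields $\||f|^p\|_{\Phi_p,\vp,w,B}=\|f\|_{\Phi,\vp,w,B}^p$. Applying \eqref{fint 1} to $|f|^p$ with $\Phi_p$ in place of $\Phi$ and extracting the $p$-th root gives \eqref{fint p}.

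For \eqref{fint q}, I would start from the layer cake identity
\[
 \int_B|f(x)|^q w(x)\,dx
 =
 q\int_0^{\infty}t^{q-1}w(B,f,t)\,dt,
\]
together with the two upper bounds $w(B,f,t)\le w(B)$ (trivial) and $w(B,f,t)\le\vp(B)w(B)/\Phi(t/\lambda)$, where $\lambda:=\|f\|_{\Phi,\vp,w,B,\weak}$ (from the definition of the weak norm). Splitting the $t$-integral at the threshold $t_*:=\lambda\Phi^{-1}(\vp(B))$ and using the trivial bound on $[0,t_*]$ contributes $w(B)t_*^q/q$. On $[t_*,\infty)$ I would substitute $u=t/\lambda$ and use the almost-increasing property of $\Phi(u)/u^p$ to get $1/\Phi(u)\ls\Phi^{-1}(\vp(B))^p/(\vp(B)u^p)$ for $u\ge\Phi^{-1}(\vp(B))$; the tail integral then reduces to $\int_{\Phi^{-1}(\vp(B))}^{\infty}u^{q-1-p}\,du$, which converges precisely because $q<p$ and contributes $\ls w(B)\lambda^q\Phi^{-1}(\vp(B))^q/(p-q)$. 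Combining the two pieces and taking the $q$-th root yields \eqref{fint q}. The main obstacle is this last step: the splitting threshold must be chosen to balance the two distributional bounds, and the polynomial-type lower bound on $\Phi$ extracted from the hypothesis is essential for tail integrability---this is also what forces the constant to blow up as $q\uparrow p$.
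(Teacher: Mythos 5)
Your proposal is correct and follows essentially the same route as the paper: reduce to $\Phi\in\cY$, prove \eqref{fint 1} via the generalized H\"older inequality \eqref{g Holder B} together with \eqref{chi norm B} and \eqref{Phi cPhi r}, deduce \eqref{fint p} by passing to $\Phi(t^{1/p})$, and prove \eqref{fint q} by the layer-cake formula split at $t_0=\Phi^{-1}(\vp(B))$ using the almost-increasing property of $\Phi(t)/t^p$ on the tail. The only cosmetic difference is that the paper replaces $\Phi((\cdot)^{1/p})$ by an equivalent genuine Young function and works with equivalences $\Phi_p^{-1}\sim(\Phi^{-1})^p$, whereas you keep the exact generalized inverse; both are fine.
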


\begin{proof}
We may assume that $\Phi\in\cY$.
By \eqref{g Holder B}, \eqref{chi norm B} and \eqref{Phi cPhi r}
we have
\begin{align*}
 \frac1{w(B)}\int_B |f(x)|w(x)\,dx
 &\le
 2\vp(B)\|f\|_{\Phi,\vp,w,B}\|\chi_B\|_{\cPhi,\vp,w,B} 
\\
 &=
 \frac{2\vp(B)}{\cPhi^{-1}(\vp(B))}\|f\|_{\Phi,\vp,w,B} 
\\
 &\le
 2\Phi^{-1}(\vp(B))\|f\|_{\Phi,\vp,w,B}.
\end{align*}

Next, we assume that $t\mapsto\Phi(t)/t^p$ is almost increasing for some $p\in(1,\infty)$.
Then
$t\mapsto\Phi(t^{1/p})/t$ is almost increasing,
which implies $\Phi((\cdot)^{1/p})\in\bcY$, see Remark~\ref{rem:index 2}.
Let $\Phi_{p}\in\cY$ such that
$\Phi_{p}\approx\Phi\left((\cdot)^{1/p}\right)$.
Then ${\Phi_{p}}^{-1} \sim (\Phi^{-1})^{p}$ 
and
$\||f|^p\|_{\Phi_{p},\vp,w,B}\sim(\|f\|_{\Phi,\vp,w,B})^p$.
Using $\eqref{fint 1}$, we have 
\begin{align*}
 \left(\frac1{w(B)}\int_{B} |f(x)|^p w(x)\,dx\right)^{1/p}
 &\ls 
 \big({\Phi_{p}}^{-1}(\vp(B)) \||f|^p\|_{\Phi_{p},\vp,w,B}\big)^{1/p}
\\
 &\sim
 \Phi^{-1}(\vp(B)) \|f\|_{\Phi,\vp,w,B}.
\end{align*}

Finally, we show \eqref{fint q}.
We may assume that $\|f\|_{\Phi,\vp,w,B,\weak}=1$.
Then 
\begin{equation*}
 w(B,f,t) \le \frac{\vp(B)w(B)}{\Phi(t)} 
 \quad\text{for all}\quad t\in(0,\infty).
\end{equation*}
Let $q\in[1,p)$ and
$t_0=\Phi^{-1}(\vp(B))$.
Then $\Phi(t_0)=\vp(B)$. 
Since $t\mapsto\Phi(t)/t^p$ is almost increasing,
\begin{align*}
 \int_B|f(x)|^qw(x)\,dx
 &=
 q\int_0^{t_0} t^{q-1} w(B,f,t) \,dt
 +q\int_{t_0}^{\infty} t^{q-1}w(B,f,t) \,dt
\\
 &\le
 {t_0}^qw(B)
 + q\int_{t_0}^{\infty} t^{q-1} \frac{\vp(B)w(B)}{\Phi(t)} \,dt
\\
 &=
 {t_0}^qw(B)
 + q\vp(B)w(B) \int_{t_0}^{\infty} \frac{ t^p }{ \Phi(t) } t^{-p+q-1} \,dt
\\
 &\ls
 {t_0}^qw(B)
 + q\vp(B)w(B) \frac{{t_0}^p}{ \Phi({t_0}) } \int_{t_0}^{\infty} t^{-p+q-1} \,dt
\\
 &=
 {t_0}^qw(B)
 +\frac{q}{p-q}{t_0}^qw(B). 
\end{align*}
This shows the conclusion.
\end{proof}

At the end of this section we state another lemma.

\begin{lem}[{\cite[Lemma~4.4]{Shi-Arai-Nakai2021Banach}}]\label{lem:int Phi vp}
Let $\Phi\in\dtwo$ and $\vp:\R^n\times(0,\infty)\to(0,\infty)$.
If $\vp$ satisfies \eqref{int vp x},
then
there exists a positive constant $C$ such that,
for all $x\in\R^n$ and $r\in(0,\infty)$,
\begin{equation*} 
 \int_r^{\infty}\frac{\Phi^{-1}(\vp(x,t))}{t}\,dt\le C\Phi^{-1}(\vp(x,r)).
\end{equation*}
\end{lem}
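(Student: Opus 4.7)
The plan is to estimate the integral by a dyadic decomposition, reduce the problem to a summation inequality, and then exploit both the $\Delta_2$-condition on $\Phi$ and the integral bound on $\vp$.

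First, I would split
\begin{equation*}
\int_r^{\infty} \frac{\Phi^{-1}(\vp(x,t))}{t}\,dt
= \sum_{k=0}^{\infty} \int_{2^{k}r}^{2^{k+1}r} \frac{\Phi^{-1}(\vp(x,t))}{t}\,dt.
\end{equation*}
On each dyadic annulus, using almost monotonicity of $\vp$ (supplied by the class $\cGdec_w$ in the paper's intended applications) together with the $\Delta_2$-condition on $\Phi$ (which gives $\Phi^{-1}(Cu)\ls\Phi^{-1}(u)$ for any fixed $C\ge1$), I would bound each piece by a constant multiple of $\Phi^{-1}(\vp(x,2^{k}r))$. The task thus reduces to proving the summation inequality
\begin{equation*}
\sum_{k=0}^{\infty} \Phi^{-1}(\vp(x,2^{k}r)) \ls \Phi^{-1}(\vp(x,r)).
\end{equation*}

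For this, I would use $I(\Phi)<\infty$, which holds because $\Phi\in\Delta_2\subset\iPy$ (see Remark~\ref{rem:index 2}). Fix any $q>I(\Phi)$; then $u\mapsto\Phi^{-1}(u)/u^{1/q}$ is almost increasing, so $\Phi^{-1}(u)\ls(u/v)^{1/q}\Phi^{-1}(v)$ whenever $u\le v$. Applying this with $u=\vp(x,2^{k}r)$ and $v\sim\vp(x,r)$ (permissible because $\vp$ is almost decreasing) yields
\begin{equation*}
\Phi^{-1}(\vp(x,2^{k}r))
\ls
\left(\frac{\vp(x,2^{k}r)}{\vp(x,r)}\right)^{\!1/q}\Phi^{-1}(\vp(x,r)),
\end{equation*}
so it remains to prove that $\sum_{k=0}^{\infty}\vp(x,2^{k}r)^{1/q}\ls\vp(x,r)^{1/q}$.

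To establish this last bound, I would apply \eqref{int vp x} at the scale $2^{k}r$ and use the almost monotonicity of $\vp$ to compare the resulting integral with a lower Riemann sum, obtaining $\sum_{j\ge 0}\vp(x,2^{k+j}r)\ls\vp(x,2^{k}r)$ uniformly in $k$. Combined with the bound $\vp(x,2^{k+j}r)\gs\vp(x,2^{k+N}r)$ for $0\le j\le N$, this forces $\vp(x,2^{k+N}r)\le\tfrac{1}{2}\vp(x,2^{k}r)$ once $N$ is chosen large enough, and iterating in blocks of length $N$ yields geometric decay of $\vp(x,2^{k}r)$, hence of $\vp(x,2^{k}r)^{1/q}$, which is therefore summable with the required constant. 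The main obstacle I anticipate is precisely this last step: converting the integral-average condition \eqref{int vp x} into pointwise geometric decay of $\vp(x,2^{k}r)$. This is where the almost monotonicity of $\vp$ enters essentially, since without such regularity $\vp(x,t)$ could concentrate on isolated spikes between dyadic scales and the pointwise control would fail.
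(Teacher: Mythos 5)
The paper gives no proof of this lemma: it is quoted from \cite[Lemma~4.4]{Shi-Arai-Nakai2021Banach} with the remark that the argument for $x$-dependent $\vp$ is identical, so there is nothing in-paper to compare line by line. Your strategy (dyadic decomposition; choose $q>I(\Phi)$, which exists because $\Phi\in\dtwo$ forces $I(\Phi)<\infty$; use the almost increasing function $u\mapsto\Phi^{-1}(u)/u^{1/q}$ to reduce to summability of $\vp(x,2^kr)^{1/q}$; extract geometric decay of $\vp(x,2^kr)$ from \eqref{int vp x}) is the natural one, and every individual step you describe is correct \emph{under the hypotheses you actually use}.

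The gap is that you use a hypothesis the lemma does not grant: almost decreasingness of $\vp(x,\cdot)$. The statement assumes only that $\vp:\R^n\times(0,\infty)\to(0,\infty)$ satisfies \eqref{int vp x}, with no membership in $\cGdec_w$. You invoke monotonicity twice in an essential way (to dominate each dyadic block by $\Phi^{-1}(\vp(x,2^kr))$, and to turn the block-sum bound into pointwise geometric decay), and you assert that without such regularity the lemma's conclusion would fail. That assertion is wrong: the lemma is true exactly as stated. What fails without monotonicity is only your reduction to the pointwise dyadic sum --- e.g.\ a $\vp$ equal to $1/t$ plus spikes of height $2^k$ and width $4^{-k}$ at $t=2^k$ satisfies \eqref{int vp x}, yet $\sum_k\Phi^{-1}(\vp(x,2^k))$ diverges while the integral in the conclusion is finite. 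The missing idea is to avoid pointwise values of $\vp$ altogether: from convexity, $\Phi^{-1}(u)/u$ is decreasing, and from $q>I(\Phi)$, $\Phi^{-1}(u)/u^{1/q}$ is almost increasing, so $\Phi^{-1}(u)\ls\big((u/v)+(u/v)^{1/q}\big)\Phi^{-1}(v)$ for all $u,v$. Taking $v=\vp(x,r)$, the first term integrates directly by \eqref{int vp x}; for the second, note that $F(r)=\int_r^\infty\vp(x,t)\frac{dt}{t}$ satisfies $F'(r)=-\vp(x,r)/r\le-F(r)/(Cr)$, hence $F(s)\le F(r)(r/s)^{1/C}$, and then H\"older on each dyadic block gives $\int_{2^kr}^{2^{k+1}r}\vp^{1/q}\frac{dt}{t}\ls\big(F(2^kr)\big)^{1/q}\ls\vp(x,r)^{1/q}2^{-k/(Cq)}$, which is summable. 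Since every application in this paper (Lemma~\ref{lem:Tf2}) has $\vp\in\cGdec_w$, your restricted version would suffice for the paper's purposes, but it does not prove the lemma as stated. A minor additional point: the inequality $\Phi^{-1}(Cu)\ls\Phi^{-1}(u)$ for fixed $C\ge1$ comes from convexity of $\Phi$ (monotonicity of $\Phi(t)/t$), not from the $\Delta_2$-condition, whose role here is only to guarantee $I(\Phi)<\infty$.
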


Note that \cite[Lemma~4.4]{Shi-Arai-Nakai2021Banach}
is the case $\vp:(0,\infty)\to(0,\infty)$.
However the proof is the same.

\section{Modular inequalities}\label{sec:modular}

In this section we show 
the modular inequalities with $\Phi\in\bcY$
by using the indices $i(\Phi)$ and $I(\Phi)$.

We first state known weighted inequalities.

\begin{thm}[\cite{Coifman1972,Coifman-Fefferman1974,Hunt-Muckenhoupt-Wheeden1973,Muckenhoupt1972,Yabuta1985}]\label{thm:Lp bdd}
Let $M$ be the Hardy-Littlewood maximal operator, 
and let $T$ be a Calder\'on-Zygmund operator of type $\omega\in\Omega$.
Let $w\in A_p$, $1\le p\le\infty$.
\begin{enumerate}
\item 
If $1<p\le\infty$, then
\begin{equation*}
 \int_{\R^n}(Mf(x))^pw(x)\,dx
 \le
 C\int_{\R^n}|f(x)|^pw(x)\,dx.
\end{equation*}
If $p=1$, then
\begin{equation*}
 \sup_{t\in(0,\infty)}tw(Mf,t)
 \le
 C\int_{\R^n}|f(x)|w(x)\,dx.
\end{equation*}
\item
If $1<p<\infty$, then
\begin{equation*}
 \int_{\R^n}|Tf(x)|^pw(x)\,dx
 \le
 C\int_{\R^n}|f(x)|^pw(x)\,dx.
\end{equation*}
If $p=1$, then
\begin{equation*}
 \sup_{t\in(0,\infty)}tw(Tf,t)
 \le
 C\int_{\R^n}|f(x)|w(x)\,dx.
\end{equation*}
\end{enumerate}
\end{thm}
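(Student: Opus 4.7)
The plan is to treat $M$ first and then deduce the bounds for $T$ via a good-$\lambda$ comparison with $M$. For the maximal operator, the strategy is the standard two-step argument: establish the weighted weak-type $(1,1)$ inequality for every $w\in A_1$, and then interpolate to get the strong $(p,p)$ bound for $w\in A_p$ with $p>1$. The weak-$(1,1)$ part is handled by applying a Vitali covering to the level set $\{Mf>t\}$, producing disjoint balls $B_j$ with $t|B_j|\lesssim\int_{B_j}|f|\,dy$; the $A_1$ condition $(\essinf_{B_j}w)|B_j|\gtrsim w(B_j)$ (equivalently $Mw\lesssim w$ a.e.) then converts the Lebesgue estimate into $tw(B_j)\lesssim\int_{B_j}|f|w\,dy$, and summing over $j$ gives the claim. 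For $p>1$, the key ingredient is the openness of the $A_p$ classes: any $w\in A_p$ lies in $A_r$ for some $r\in[1,p)$, a consequence of the reverse Hölder inequality \eqref{delta}. One then applies the Marcinkiewicz interpolation theorem for sublinear operators on the measure $w\,dx$ between the weak-$(r,r)$ endpoint just established (using $w\in A_r$) and the trivial $L^\infty$ endpoint.

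For the Calderón-Zygmund operator, the plan is the Coifman-Fefferman good-$\lambda$ inequality: for every $w\in A_\infty$ there exist constants $\delta,C>0$ and a sufficiently small $\gamma>0$ such that
\begin{equation*}
 w\bigl(\{|Tf|>2\lambda,\,Mf\le\gamma\lambda\}\bigr)
 \le C\gamma^{\delta}\,w\bigl(\{|Tf|>\lambda\}\bigr),
\end{equation*}
for all $\lambda>0$. This is proved by performing a Whitney decomposition of $\{|Tf|>\lambda\}$ and showing, via the smoothness estimate on the kernel $K$ together with the $L^2$ boundedness of $T$ on each Whitney cube, that the exceptional set is small in Lebesgue measure; the $A_\infty$ property \eqref{delta} then transfers this smallness to $w$-measure. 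Integrating the good-$\lambda$ inequality in $\lambda$ against the measure $d(\lambda^p)$, and absorbing the $\gamma^{\delta}$ factor by choosing $\gamma$ small, yields
\begin{equation*}
 \int_{\R^n}|Tf|^p w\,dx \le C\int_{\R^n}(Mf)^p w\,dx,
\end{equation*}
and the strong $(p,p)$ bound for $T$ on $L^p(\R^n,w)$ with $p>1$ and $w\in A_p$ follows from the corresponding bound for $M$ proved above.

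The endpoint weak-$(1,1)$ inequality for $T$ with $w\in A_1$ is proved separately by the weighted Calderón-Zygmund decomposition. One decomposes $f=g+\sum b_j$ at height $t$, where the bad parts $b_j$ are supported on disjoint Whitney cubes $Q_j$ of the level set $\{Mf>t\}$ and have mean zero. The good part $g$ satisfies $\|g\|_{L^2(w)}^2\lesssim t\|f\|_{L^1(w)}$ (using $A_1\subset A_2$ and the pointwise bound $|g|\lesssim t$), so the $A_2$ bound for $T$ controls $\{|Tg|>t\}$; for the bad part, the kernel smoothness gives $\int_{\R^n\setminus 2Q_j}|Tb_j|\,dx\lesssim\int_{Q_j}|f|\,dx$, and the $A_1$ condition converts this into a $w$-weighted estimate, while the contribution from $\bigcup 2Q_j$ is handled directly using $w(\bigcup 2Q_j)\lesssim w(\bigcup Q_j)$ (a consequence of \eqref{w kB}) together with the weak-$(1,1)$ bound for $M$.

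The main obstacle is the good-$\lambda$ step: one must verify that on each Whitney cube the kernel estimate together with the $L^2$-boundedness produces, after a careful splitting of $f$ into the near and far parts relative to the cube, a sharp Lebesgue-measure estimate of size $\gamma^{\text{const}}$ on the exceptional set, independently of $w$. The role of the $A_\infty$ assumption is then purely to transfer this estimate to $w$-measure through \eqref{delta}, and the absorption argument in the integration over $\lambda$ requires the finiteness of $\|Tf\|_{L^p(w)}$ on a dense class, which is addressed by an approximation argument using compactly supported bounded functions.
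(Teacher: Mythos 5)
The paper does not prove this theorem at all: it is stated as a known result, with the proofs delegated to the cited references (Muckenhoupt, Hunt--Muckenhoupt--Wheeden, Coifman, Coifman--Fefferman, Yabuta), and the only piece the authors explicitly reuse later is the consequence \eqref{CZ M}. Your proposal is therefore not comparable to an argument in the paper; what you have written is essentially a reconstruction of the standard proofs from those references, and the overall architecture (Vitali covering plus the $A_1$ condition for the weak $(1,1)$ bound of $M$; openness of the $A_p$ classes plus Marcinkiewicz interpolation for $1<p<\infty$, with the trivial $L^\infty$ endpoint; the Coifman--Fefferman good-$\lambda$ inequality integrated against $d(\lambda^p)$ to get \eqref{CZ M} and hence the strong bounds for $T$; a weighted Calder\'on--Zygmund decomposition for the weak $(1,1)$ endpoint of $T$) is correct and is exactly how these results are established in the literature, including the extension to kernels of type $\omega$ with the Dini condition due to Yabuta.

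Two points in your sketch are stated too loosely to stand as written. First, the lower interpolation endpoint: you only derive the weak $(1,1)$ inequality for $w\in A_1$, but then invoke ``the weak-$(r,r)$ endpoint just established'' for $w\in A_r$ with $r>1$. That endpoint is not a consequence of the $A_1$ case; you need to rerun the covering argument at exponent $r$, using \eqref{fint B} to pass from $t^r\lesssim\bigl(|B_j|^{-1}\int_{B_j}|f|\bigr)^r$ to $t^r w(B_j)\lesssim\int_{B_j}|f|^r w$ (equivalently, the pointwise bound $(Mf)^r\le[w]_{A_r}M_w(|f|^r)$ together with the weak $(1,1)$ bound for $M_w$). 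Second, in the endpoint estimate for $T$ you claim the unweighted bound $\int_{\R^n\setminus 2Q_j}|Tb_j|\,dx\lesssim\int_{Q_j}|f|\,dx$ and then say the $A_1$ condition ``converts'' it into a weighted one; an unweighted $L^1$ estimate cannot be upgraded after the fact. The weight must be carried inside the kernel estimate: one bounds $\int_{\R^n\setminus 2Q_j}|K(x,y)-K(x,y_j)|\,w(x)\,dx\lesssim Mw(y)\lesssim w(y)$ for $y\in Q_j$ using the H\"ormander--Dini condition together with $Mw\lesssim w$ a.e.\ for $w\in A_1$, and only then integrates against $|b_j(y)|\,dy$. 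With these two repairs the argument is the standard one and is sound.
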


Coifman and Fefferman~\cite{Coifman-Fefferman1974} prove the inequality
\begin{equation}\label{CZ M}
 \int_{\R^n}|Tf(x)|^pw(x)\,dx
 \le
 C\int_{\R^n}(Mf(x))^pw(x)\,dx,
\end{equation}
for any $w\in A_{\infty}$ and any Calder\'on-Zygmund operator with standard kernel
(the case $\omega(t)=t$ in Definition~\ref{defn:Kernel}).
By the kernel estimates in \cite{Yabuta1985}
we see that the inequality \eqref{CZ M} valids for any Calder\'on-Zygmund operator 
of type $\omega\in\Omega$.
From the inequality \eqref{CZ M} 
Curbera, Garcia-Cuerva, Martell and Perez~\cite{Curbera-GarciaCuerva-Martell-Perez2006}
proved the following inequalities:
\begin{align}\label{modular T M}
 \int_{\R^n}\Phi(|Tf(x)|)w(x)\,dx
 &\le
 C\int_{\R^n}\Phi(Mf(x))w(x)\,dx,
\\
\label{ww modular T M}
 \sup_{t\in(0,\infty)}\Phi(t)w(Tf,t)
 &\le
 C\sup_{t\in(0,\infty)}\Phi(t)w(Mf,t).
\end{align}
Then they 
proved the following modular inequalities
except \eqref{ww modular M} and \eqref{ww modular CZO}, 
see \cite[Theorem~3.7]{Curbera-GarciaCuerva-Martell-Perez2006}.
In this section we prove \eqref{ww modular M} and then \eqref{ww modular CZO}. 
That is, we have the following theorem:

\begin{thm}%
\label{thm:modular}
Let $M$ be the Hardy-Littlewood maximal operator, 
and let $T$ be a Calder\'on-Zygmund operator of type $\omega\in\Omega$.
Let $\Phi\in\bcY$, and let $w\in A_{i(\Phi)}$.
\begin{enumerate}
\item 
If $1<i(\Phi)\le\infty$, then
\begin{align}\label{modular M}
 \int_{\R^n}\Phi(Mf(x))w(x)\,dx
 &\le
 C\int_{\R^n}\Phi(C|f(x)|)w(x)\,dx,
\\
\label{ww modular M}
 \sup_{t\in(0,\infty)}\Phi(t)w(Mf,t)
 &\le
 C\sup_{t\in(0,\infty)}\Phi(t)w(Cf,t).
\end{align}
If $i(\Phi)=1$, then
\begin{equation}\label{w modular M}
 \sup_{t\in(0,\infty)}\Phi(t)w(Mf,t)
 \le
 C\int_{\R^n}\Phi(C|f(x)|)w(x)\,dx.
\end{equation}
\item
If $1<i(\Phi)\le I(\Phi)<\infty$, then
\begin{align}\label{modular CZO}
 \int_{\R^n}\Phi(|Tf(x)|)w(x)\,dx
 &\le
 C\int_{\R^n}\Phi(C|f(x)|)w(x)\,dx,
\\
\label{ww modular CZO}
 \sup_{t\in(0,\infty)}\Phi(t)w(Tf,t)
 &\le
 C\sup_{t\in(0,\infty)}\Phi(t)w(Cf,t).
\end{align}
If $i(\Phi)=1\le I(\Phi)<\infty$, then
\begin{equation}\label{w modular CZO}
 \sup_{t\in(0,\infty)}\Phi(t)w(Tf,t)
 \le
 C\int_{\R^n}\Phi(C|f(x)|)w(x)\,dx.
\end{equation}
\end{enumerate}
\end{thm}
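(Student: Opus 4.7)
The plan is to prove the new weak--weak inequality \eqref{ww modular M} directly by a Marcinkiewicz-style level-set decomposition, and then to deduce \eqref{ww modular CZO} by chaining the already-known comparison \eqref{ww modular T M} with \eqref{ww modular M}. The other four inequalities \eqref{modular M}, \eqref{w modular M}, \eqref{modular CZO}, \eqref{w modular CZO} are already available from \cite[Theorem~3.7]{Curbera-GarciaCuerva-Martell-Perez2006}, so the only substantive task is \eqref{ww modular M}.

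To prove \eqref{ww modular M}, I would first exploit the openness of the Muckenhoupt classes to pick $r\in(1,i(\Phi))$ with $w\in A_r$, and then an intermediate exponent $p\in(r,i(\Phi))$ for which $t\mapsto\Phi(t)/t^p$ is almost increasing; the hypothesis $i(\Phi)>1$ is exactly what makes both choices possible. By Theorem~\ref{thm:Lp bdd}(i), $M$ is strong $(r,r)$-bounded on $L^r(\R^n,w)$. Fix a level $\mu>0$ and a constant $C>2$, and split $f=f_1+f_2$ with $f_1=f\chi_{\{|f|>\mu/C\}}$. Since $\|f_2\|_\infty\le\mu/C$ and $M$ does not increase the $L^\infty$-norm, $Mf_2\le\mu/C$ pointwise, so $\{Mf>\mu\}\subset\{Mf_1>\mu(1-1/C)\}$, and Chebyshev combined with the strong $(r,r)$ bound gives
\begin{equation*}
w(\{Mf>\mu\})\le\frac{C_r^r}{(\mu(1-1/C))^r}\int_{|f|>\mu/C}|f(x)|^r w(x)\,dx.
\end{equation*}

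The heart of the argument is the estimate of this integral in terms of the weak Orlicz quantity. Setting $N=\sup_{t>0}\Phi(t)w(Cf,t)$, one has $w(\{|f|>s\})\le N/\Phi(Cs)$, and a layer-cake expansion together with the change of variable $u=Cs$ in the tail reduces matters to controlling $\int_\mu^\infty u^{r-1}/\Phi(u)\,du$. Here the almost-increasing property of $\Phi(u)/u^p$ gives $\Phi(u)\ge A^{-1}\Phi(\mu)(u/\mu)^p$ for $u\ge\mu$, and the strict inequality $p>r$ turns the tail into a finite quantity of size $O(\mu^r/\Phi(\mu))$. Assembling the pieces, the factor $\Phi(\mu)$ cancels and one obtains $\Phi(\mu)w(\{Mf>\mu\})\lesssim N$ uniformly in $\mu$, which is \eqref{ww modular M}. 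Then \eqref{ww modular CZO} follows immediately: under $1<i(\Phi)\le I(\Phi)<\infty$, \eqref{ww modular T M} applies, and chaining it with \eqref{ww modular M} yields the conclusion.

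The main obstacle is the convergence of the tail integral $\int_\mu^\infty u^{r-1}/\Phi(u)\,du$. Relying only on the weak $(r,r)$ bound for $M$ together with the almost-increasing property of $\Phi(t)/t^r$ at the endpoint exponent $r$ would force a logarithmic divergence, so the scheme must introduce a strict intermediate exponent $p\in(r,i(\Phi))$ — the room to fit such a $p$ is precisely what $i(\Phi)>1$ guarantees. A second subtle point is that $I(\Phi)=\infty$ is permitted, so $\Phi$ need not satisfy the $\Delta_2$-condition and ratios like $\Phi(c\mu)/\Phi(\mu)$ cannot be absorbed into a constant; this is handled by building the dilation $C$ directly into the right-hand side of \eqref{ww modular M}, which is exactly what the formulation of the statement allows.
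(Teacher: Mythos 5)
Your proposal is correct, and its overall architecture matches the paper's: the four inequalities \eqref{modular M}, \eqref{w modular M}, \eqref{modular CZO}, \eqref{w modular CZO} are quoted from \cite[Theorem~3.7]{Curbera-GarciaCuerva-Martell-Perez2006}, and \eqref{ww modular CZO} is obtained by chaining \eqref{ww modular T M} with \eqref{ww modular M}. Where you genuinely diverge is in the proof of the one new inequality \eqref{ww modular M}. The paper does not use a level-set splitting at all: it first proves (Lemma~\ref{lem:Mw}) the weak--weak modular inequality for the weighted maximal operator $M_w$ via the pointwise Jensen-type bound $\Phi(M_wf)\le\bigl(cM_w(\Phi(c|f|)^{\theta})\bigr)^{1/\theta}$, valid because $i(\Phi)>1$ makes $\Phi^{\theta}$ quasi-convex for some $\theta\in(0,1)$, combined with the boundedness of $M_w$ on $\wL^{1/\theta}(\R^n,w)$ and the identity \eqref{weak type}; it then transfers this to $M$ through $Mf\le\bigl([w]_{A_r}M_w(|f|^r)\bigr)^{1/r}$ (from \eqref{fint B}) with $r\in(1,i(\Phi))$, $w\in A_r$, applying the lemma to $\Phi_r(t)=\Phi(t^{1/r})$. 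Your route is instead a direct Marcinkiewicz-style restricted-weak-type argument: split at the level $\mu/C$, use the strong $(r,r)$ weighted bound on the large part, and close the tail integral $\int_\mu^\infty u^{r-1}/\Phi(u)\,du$ by the almost-increasing property of $\Phi(u)/u^{p}$ for an intermediate $p\in(r,i(\Phi))$. This is essentially the Liu--Wang interpolation scheme that the paper explicitly discusses, but run with the sharp dilation index $i(\Phi)$ in place of the non-sharp index $a_{\Phi}$, so it recovers the theorem in full strength; its advantage is that it is elementary and self-contained (no weighted maximal operator $M_w$, no quasi-convexity of $\Phi^{\theta}$), while the paper's composition argument is shorter and avoids the layer-cake computation. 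One small point to make explicit in a write-up: the almost-increasing property of $t\mapsto\Phi(t)/t^{p}$ for every $p<i(\Phi)$ must be justified without assuming $I(\Phi)<\infty$ (the paper's Remark~\ref{rem:index 2} states the two-sided version only under $I(\Phi)<\infty$); this follows in the standard way from the submultiplicativity of $h_{\Phi}$ and the characterization of $i(\Phi)$ as a supremum, and is needed since part (i) allows $I(\Phi)=\infty$.
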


Kokilashvili and Krbec~\cite{Kokilashvili-Krbec1991} also investigated the modular inequalities 
except \eqref{ww modular M} and \eqref{ww modular CZO}.
If $1<i(\Phi)\le I(\Phi)<\infty$ and $w$ is a weight, 
then the modular inequality \eqref{modular M} 
implies $w\in A_{i(\Phi)}$.
see \cite[Theorem~2.1.1]{Kokilashvili-Krbec1991}.
If $T=R_j$, $i=1,\dots, n$, which are the Reisz transforms,
then \eqref{w modular CZO} also 
implies $w\in A_{i(\Phi)}$, 
see \cite[Theorem~3.1.1]{Kokilashvili-Krbec1991}.
From this fact, \eqref{modular T M} and \eqref{ww modular T M} we have the following corollary:

\begin{cor}\label{cor:modular}
Let $M$ be the Hardy-Littlewood maximal operator, 
and let $R_j$, $i=1,\dots, n$, be the Reisz transforms.
Let $w$ be a weight and 
$\Phi\in\bdtwo\cap\bntwo$, i.e., $1<i(\Phi)\le I(\Phi)<\infty$.
Then the following are equivalent:
\begin{enumerate}
\item \ 
$\displaystyle
 \int_{\R^n}\Phi(Mf(x))w(x)\,dx
 \le
 C\int_{\R^n}\Phi(C|f(x)|)w(x)\,dx,
$
\item \
$\displaystyle
 \sup_{t\in(0,\infty)}\Phi(t)w(Mf,t)
 \le
 C\sup_{t\in(0,\infty)}\Phi(t)w(Cf,t),
$
\item \
$\displaystyle
 \sup_{t\in(0,\infty)}\Phi(t)w(Mf,t)
 \le
 C\int_{\R^n}\Phi(C|f(x)|)w(x)\,dx,
$
\item \
$\displaystyle
 \int_{\R^n}\Phi(|R_jf(x)|)w(x)\,dx
 \le
 C\int_{\R^n}\Phi(C|f(x)|)w(x)\,dx,
$
\item \
$\displaystyle
 \sup_{t\in(0,\infty)}\Phi(t)w(R_jf,t)
 \le
 C\sup_{t\in(0,\infty)}\Phi(t)w(Cf,t),
$
\item \
$\displaystyle
 \sup_{t\in(0,\infty)}\Phi(t)w(R_jf,t)
 \le
 C\int_{\R^n}\Phi(C|f(x)|)w(x)\,dx,
$
\item \
$\displaystyle
 w \in A_{i(\Phi)}.
$
\end{enumerate}
\end{cor}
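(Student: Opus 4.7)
The forward direction (vii) $\Rightarrow$ (i)--(vi) is immediate from Theorem~\ref{thm:modular}: since $\Phi\in\bdtwo\cap\bntwo$ gives $1<i(\Phi)\le I(\Phi)<\infty$, the strong branches of that theorem apply, yielding the three modular inequalities for $M$ and, applied to each Riesz transform $R_j$ (a Calder\'on--Zygmund operator of type $\omega(t)=t\in\Omega$), the three inequalities for $R_j$.

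For the converse the plan is to route each of (i)--(v) into (vi), and then invoke the Kokilashvili--Krbec converse \cite[Theorem~3.1.1]{Kokilashvili-Krbec1991} cited in the excerpt, which delivers (vi) $\Rightarrow$ (vii). The routing uses only the elementary Chebyshev bound
\[
 \sup_{t\in(0,\infty)}\Phi(t)\,w(g,t)\;\le\;\int_{\R^n}\Phi(|g(x)|)\,w(x)\,dx,
\]
valid for any measurable $g$, and the Curbera--Garcia-Cuerva--Martell--Perez comparison \eqref{ww modular T M} between $R_jf$ and $Mf$. Specifically, applying Chebyshev to the left-hand sides gives (iv) $\Rightarrow$ (vi) and (i) $\Rightarrow$ (iii); applying it to the right-hand sides gives (ii) $\Rightarrow$ (iii) and (v) $\Rightarrow$ (vi); and applying \eqref{ww modular T M} directly gives (iii) $\Rightarrow$ (vi). Because $\Phi\in\bdtwo$, the multiplicative constants that accumulate inside $\Phi(C|f|)$ along each chain can be absorbed into a single $C$, so the composed inequalities land in the required form.

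The only subtlety is that \eqref{ww modular T M} nominally requires $w\in A_\infty$; this causes no genuine obstacle because one can instead quote \cite[Theorem~2.1.1]{Kokilashvili-Krbec1991} for (i) $\Rightarrow$ (vii) directly, and the inclusion $A_{i(\Phi)}\subset A_\infty$ retroactively legitimates the use of \eqref{ww modular T M} in the remaining reductions. The substantive content of the corollary is therefore carried entirely by Theorem~\ref{thm:modular} together with the two cited Kokilashvili--Krbec converse theorems; no further estimate needs to be proved, and the only real work is the bookkeeping just sketched.
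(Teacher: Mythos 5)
Your overall architecture --- the forward direction from Theorem~\ref{thm:modular}, the converse by funnelling everything into the two Kokilashvili--Krbec converses via Chebyshev and the comparison \eqref{ww modular T M} --- is essentially the route the paper sketches, and the Chebyshev reductions (i)$\Rightarrow$(iii), (ii)$\Rightarrow$(iii), (iv)$\Rightarrow$(vi), (v)$\Rightarrow$(vi) are all correct. The genuine gap is the edge (iii)$\Rightarrow$(vi). The inequality \eqref{ww modular T M} is only available for $w\in A_\infty$, and your ``retroactive legitimation'' is circular: when proving (iii)$\Rightarrow$(vii) (and hence (ii)$\Rightarrow$(vii)), the only hypothesis at your disposal is (iii); the fact that (i) implies $w\in A_{i(\Phi)}$ is of no help, because nothing in your chain derives (i) from (iii). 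As written, the implications (ii)$\Rightarrow$(vii) and (iii)$\Rightarrow$(vii) are not established.

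The gap is fixable in either of two ways. (a) Quote the Kokilashvili--Krbec maximal-function theorem in full: it is stated as an equivalence of the weak-type modular inequality, the strong-type modular inequality and the weight condition, so (iii)$\Rightarrow$(vii) comes from the same citation as (i)$\Rightarrow$(vii). (b) Extract $w\in A_\infty$ from (iii) before invoking \eqref{ww modular T M}: testing (iii) on $f=\chi_E$ with $E\subset B$ and using $M\chi_E\ge|E|/|B|$ on $B$ gives
\begin{equation*}
 \Phi\!\left(\frac{|E|}{2|B|}\right)w(B)\le C\,\Phi(C)\,w(E),
\end{equation*}
and since $t\mapsto\Phi(t)/t^q$ is almost decreasing for some finite $q>I(\Phi)$ (Remark~\ref{rem:index 2}), this yields $(|E|/|B|)^{q}w(B)\ls w(E)$ for all $E\subset B$, which is a standard characterization of $A_\infty$. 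Once $w\in A_\infty$ is in hand, \eqref{ww modular T M} is legitimately applicable and (iii)$\Rightarrow$(vi)$\Rightarrow$(vii) goes through exactly as you wrote it.
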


Note that another pair of indices $a_{\Phi}$ and $b_{\Phi}$ are defined by
\begin{equation*}
 a_{\Phi}=\inf_{t\in(0,\infty)}\frac{t\Phi'(t)}{\Phi(t)},
 \quad
 b_{\Phi}=\sup_{t\in(0,\infty)}\frac{t\Phi'(t)}{\Phi(t)}.
\end{equation*}
Then
$t\mapsto\Phi(t)/t^{a_{\Phi}}$ is increasing and 
$t\mapsto\Phi(t)/t^{b_{\Phi}}$ is decreasing (not almost),
see \cite[Proposition~2.1~(ii) and (iii)]{Fu-Yang-Yuan2012} for example. 
However, these indices $a_{\Phi}$ and $b_{\Phi}$ are not sharp for the modular inequalities, 
see the following example.

\begin{exmp}\label{exmp:type}
Let
\begin{equation*}
 \Phi(t)=
 \begin{cases}
  t^2, & t\in[0,1/4], \\
  t/2-1/{16}, & t\in(1/4,1/2], \\
  t^2/2+1/{16}, & t\in(1/2,\infty).
 \end{cases}
\end{equation*}
Then
\begin{equation*}
 i(\Phi)=I(\Phi)=2, \quad\text{but}\quad a_{\Phi}=4/3, \ b_{\Phi}=2.
\end{equation*}
\end{exmp}

Liu and Wang~\cite{Liu-Wang2013} also considered
the weighted Orlicz spaces and they showed 
the modular inequality \eqref{ww modular M}
by using the Marcinkiewicz-type interpolation theorem,
see the proof of \cite[Theorem~5.1]{Liu-Wang2013}.
However, they 
used indices $a_{\Phi}$ and $b_{\Phi}$,
which are not sharp as shown by Example~\ref{exmp:type}.

To prove \eqref{ww modular M} 
we prepare the following lemma:

\begin{lem}\label{lem:Mw}
For $w\in A_{\infty}$, 
let 
\begin{equation*}
 M_wf(x)=\sup_{B\ni x}\frac1{w(B)}\int_B |f(y)|w(y)\,dy.
\end{equation*}
Let $\Phi\in\bcY$.
If $i(\Phi)>1$, then there exists a positive constant $c_1$ such that
\begin{equation*}
 \sup_{t\in(0,\infty)}\Phi(t)w(M_wf,t)
 \le
 c_1\sup_{t\in(0,\infty)}\Phi(t)w(c_1f,t).
\end{equation*}
\end{lem}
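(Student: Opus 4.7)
The strategy is a Marcinkiewicz-style truncation using the weak-$(1,1)$ bound of $M_w$ with respect to the measure $w\,dx$, combined with the fact that $\Phi(t)/t^p$ is almost increasing for some $p>1$, and to absorb the absence of a $\overline\Delta_2$ condition by choosing the dilation constant $c_1$ large.

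First, I would use $w\in A_\infty$ to get the doubling property $w(2B)\ls w(B)$, so that a standard Vitali covering argument yields
\begin{equation*}
 w(M_wg,s)\le \frac{C_0}{s}\int_{\R^n} |g(x)|w(x)\,dx
 \quad\text{for all $g$ and $s>0$.}
\end{equation*}
Since $i(\Phi)>1$, I fix $p\in(1,i(\Phi))$; then the definition of $i(\Phi)$ together with the monotonicity and submultiplicativity of $h_\Phi$ (extending $h_\Phi(\lambda_0)\le\lambda_0^p$ from a single small $\lambda_0$ to all $\lambda\in(0,1]$) gives a constant $c_0>0$ with
\begin{equation*}
 c_0\,\frac{\Phi(s)}{s^p}\le\frac{\Phi(t)}{t^p}\quad\text{whenever }0<s\le t.
\end{equation*}
Assuming the RHS is finite, set $N=\sup_s\Phi(s)w(c_1f,s)$, so that $w(|f|>s)=w(c_1|f|>c_1s)\le N/\Phi(c_1s)$, and fix $t>0$. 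The goal is to bound $\Phi(t)w(M_wf,t)$ by $c_1N$ for a sufficiently large $c_1\ge 2$ independent of $f$ and $t$.

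Next I would perform the truncation $f=f_1+f_2$ with $f_1=f\chi_{\{|f|>t/2\}}$: since $\|f_2\|_\infty\le t/2$ forces $M_wf_2\le t/2$, we have $\{M_wf>t\}\subset\{M_wf_1>t/2\}$. Applying weak-$(1,1)$ and the layer-cake formula,
\begin{equation*}
 \Phi(t)w(M_wf,t)
 \le \frac{2C_0\Phi(t)}{t}\!\left[\frac{t}{2}w(|f|>t/2)+\int_{t/2}^\infty\! w(|f|>s)\,ds\right].
\end{equation*}
Using $w(|f|>s)\le N/\Phi(c_1s)$, the first bracketed term is $\le(t/2)N/\Phi(c_1t/2)$, and after the substitution $r=c_1s$ the second becomes $(N/c_1)\int_{c_1t/2}^\infty dr/\Phi(r)$. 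The almost-increasing estimate on $[c_1t/2,\infty)$ then bounds this integral by $(c_1t/2)/(c_0(p-1)\Phi(c_1t/2))$. Combining,
\begin{equation*}
 \Phi(t)w(M_wf,t)\le C_0C_1N\,\frac{\Phi(t)}{\Phi(c_1t/2)},
 \quad C_1=1+\frac{1}{c_0(p-1)}.
\end{equation*}

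The main obstacle is that we do not assume $\Phi\in\overline\Delta_2$ (i.e.\ $I(\Phi)$ may be $\infty$), so the ratio $\Phi(t)/\Phi(c_1t/2)$ is not directly controlled. This is precisely where the freedom in $c_1$ is exploited: applying the almost-increasing estimate in the \emph{other} direction with $c_1t/2\ge t$ yields
\begin{equation*}
 \frac{\Phi(t)}{\Phi(c_1t/2)}\le\frac{1}{c_0}\left(\frac{2}{c_1}\right)^p,
\end{equation*}
so that $\Phi(t)w(M_wf,t)\le c_0^{-1}C_0C_1\,2^p\,c_1^{-p}N$. Choosing $c_1\ge 2$ large enough that $c_0^{-1}C_0C_1\,2^p\,c_1^{-p}\le c_1$, i.e.\ $c_1^{p+1}\ge 2^pC_0C_1/c_0$, the inequality $\sup_t\Phi(t)w(M_wf,t)\le c_1\sup_t\Phi(t)w(c_1f,t)$ follows, with $c_1$ depending only on $p$, $c_0$, and $C_0$ (hence only on $\Phi$ and $[w]_{A_\infty}$).
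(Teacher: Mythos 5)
Your proof is correct, but it takes a genuinely different route from the paper's. The paper proves the lemma by invoking the pointwise domination $\Phi(M_wf(x))\le \bigl(cM_w(\Phi(c|f|)^{\theta})(x)\bigr)^{1/\theta}$ for some $\theta\in(0,1)$ with $\Phi^{\theta}\in\biPy$ (borrowed from Curbera--Garc\'ia-Cuerva--Martell--P\'erez), then applies the boundedness of $M_w$ on $\wL^{1/\theta}(\R^n,w)$ and the identity $\sup_t\Phi(t)w(f,t)=\sup_t t\,w(\Phi(|f|),t)$. You instead run a direct Marcinkiewicz-type truncation at height $t/2$, using only the weak-$(1,1)$ bound of $M_w$ with respect to the doubling measure $w\,dx$, the almost-increasing property of $\Phi(s)/s^p$ for $p\in(1,i(\Phi))$, and the freedom to enlarge the dilation constant $c_1$ to compensate for the absence of any $\bdtwo$ assumption (the key point, since $I(\Phi)$ may be infinite and $\Phi(t)/\Phi(c_1t/2)$ is otherwise uncontrolled). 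All steps check out: the extension of $h_{\Phi}(\lambda_0)\le\lambda_0^p$ to $h_\Phi(\lambda)\ls\lambda^p$ on $(0,1]$ via submultiplicativity is valid, the layer-cake computation and the tail integral $\int_{r_0}^{\infty}dr/\Phi(r)\le r_0/(c_0(p-1)\Phi(r_0))$ are correct, and the final absorption $c_1^{p+1}\ge 2^pC_0C_1/c_0$ is consistent because every intermediate estimate holds uniformly for all $c_1\ge2$. Your argument is more self-contained (it avoids the CGMP pointwise lemma and needs only weak $(1,1)$ rather than weak $L^{1/\theta}$ boundedness of $M_w$), and notably it uses the sharp index $i(\Phi)$ rather than the non-sharp derivative index $a_{\Phi}$ that the paper criticizes in the Liu--Wang version of this same interpolation-style argument; the paper's proof is shorter given the cited machinery.
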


\begin{proof}
We may asssume that $\Phi\in\cY$.
First note that $M_w$ is bounded from $\wL^p(\R^n,w)$ to itself 
as same as $M$ is bounded from $\wL^p(\R^n)$ to itself if $p\in(1,\infty]$.
If $i(\Phi)>1$, then $\Phi^{\theta}\in\biPy$ for some $\theta\in(0,1)$.
In this case we have the inequality
\begin{equation*}
 \Phi(M_wf(x))\le (cM_w(\Phi(c|f|)^{\theta})(x))^{1/\theta},
\end{equation*}
for some constant $c$ 
by the same way as \cite[Proof of Proposition~5.1]{Curbera-GarciaCuerva-Martell-Perez2006}.
Then
\begin{align*}
 \sup_{t\in(0,\infty)}tw(\Phi(M_wf),t) 
 &\le
 \sup_{t\in(0,\infty)}tw((cM_w(\Phi(c|f|)^{\theta}))^{1/\theta},t) \\
 &=
 \sup_{t\in(0,\infty)}t^{1/\theta}w(cM_w(\Phi(c|f|)^{\theta}),t) \\
 &\ls
 \sup_{t\in(0,\infty)}t^{1/\theta}w(\Phi(c|f|)^{\theta},t) \\
 &=
 \sup_{t\in(0,\infty)}tw(\Phi(c|f|),t).
\end{align*}
By \eqref{weak type} 
we have the conclusion.
\end{proof}

\begin{proof}[Proof of \eqref{ww modular M}]
We may asssume that $\Phi\in\cY$.
We use a similar way to the proof of \eqref{modular M} in 
\cite{Curbera-GarciaCuerva-Martell-Perez2006}.
Let $w\in A_i(\Phi)$.
In both cases $1<i(\Phi)<\infty$ and $i(\Phi)=\infty$,
there exists $r\in(1,i(\Phi))$ such that $w\in A_r$.
Set $\Phi_r(t)=\Phi(t^{1/r})$.
Then $i(\Phi_r)=i(\Phi)/r>1$.
By \eqref{fint B} 
we have 
$Mf(x)\le\left([w]_{A_r}M_w(|f|^r)(x)\right)^{1/r}$
and then 
\begin{equation*}
 \Phi(Mf(x))\le\Phi_r(M_w\widetilde{f}(x)),
\end{equation*}
where $\widetilde{f}=[w]_{A_r}|f(x)|^r$.
By Lemma~\ref{lem:Mw} and \eqref{weak type}
we have
\begin{align*}
 \sup_{t\in(0,\infty)}tw(\Phi(Mf),t) 
 &\le
 \sup_{t\in(0,\infty)}tw(\Phi_r(M_w\widetilde{f}),t) \\
 &\ls
 \sup_{t\in(0,\infty)}tw(\Phi_r(c_1\widetilde{f}),t) \\
 &=
 \sup_{t\in(0,\infty)}tw(\Phi(Cf),t),
\end{align*}
which shows the conclusion.
\end{proof}

\section{Proofs}\label{sec:proof}

To prove Theorem~\ref{thm:M CZO}, we prepare three lemmas.

\begin{lem}\label{lem:Mf1}
Let $\Phi\in\bcY$, $w\in A_{i(\Phi)}$ and $\vp\in\cGdec_w$.
Let $B$ be a ball.
If $i(\Phi)=1$ and $\|f\|_{\LPp(\R^n,w)}=1$,
then
\begin{equation*}
 \|M(f\chi_{2B})\|_{\Phi,\vp,w,B,\weak}\le C
 \quad\text{and}\quad
 \|T(f\chi_{2B})\|_{\Phi,\vp,w,B,\weak}\le C.
\end{equation*}
If $1<i(\Phi)\le I(\Phi)\le\infty$ and $\|f\|_{\LPp(\R^n,w)}=1$ or $\|f\|_{\wLPp(\R^n,w)}=1$, 
then
\begin{equation*}
 \|M(f\chi_{2B})\|_{\Phi,\vp,w,B}\le C
 \quad\text{or}\quad
 \|M(f\chi_{2B})\|_{\Phi,\vp,w,B, \weak}\le C,
\end{equation*}
respectively.
If $1<i(\Phi)\le I(\Phi)<\infty$ and $\|f\|_{\LPp(\R^n,w)}=1$ or $\|f\|_{\wLPp(\R^n,w)}=1$, 
then
\begin{equation*}
 \|T(f\chi_{2B})\|_{\Phi,\vp,w,B}\le C
 \quad\text{or}\quad
 \|T(f\chi_{2B})\|_{\Phi,\vp,w,B, \weak}\le C,
\end{equation*}
respectively.
In the above the constant $C$ is independent of $f$ and $B$.
\end{lem}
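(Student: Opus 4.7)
The plan is to deduce each claim from the appropriate modular inequality in Theorem~\ref{thm:modular}, combined with the doubling property of the measure $\vp(B)w(B)$ under the standing assumptions. Fix a ball $B$ and a parameter $\lambda>0$; all constants below are independent of $f$ and $B$.

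For the strong bound on $M$ in the case $1<i(\Phi)\le I(\Phi)\le\infty$ with $\|f\|_{\LPp(\R^n,w)}=1$, first pass from the local to the global modular:
\[
\frac{1}{\vp(B)w(B)}\int_B\Phi\!\left(\frac{M(f\chi_{2B})(x)}{\lambda}\right)w(x)\,dx
\le
\frac{1}{\vp(B)w(B)}\int_{\R^n}\Phi\!\left(\frac{M(f\chi_{2B})(x)}{\lambda}\right)w(x)\,dx.
\]
Apply \eqref{modular M}, observe that $f\chi_{2B}$ is supported in $2B$, and use the doubling $\vp(B)w(B)\sim\vp(2B)w(2B)$ to bound the right-hand side by
\[
\frac{C}{\vp(2B)w(2B)}\int_{2B}\Phi\!\left(\frac{C|f(x)|}{\lambda}\right)w(x)\,dx.
\]
Choosing $\lambda=C\|f\|_{\Phi,\vp,w,2B}$ makes this at most $1$; since $\|f\|_{\Phi,\vp,w,2B}\le\|f\|_{\LPp(\R^n,w)}=1$, we obtain $\|M(f\chi_{2B})\|_{\Phi,\vp,w,B}\le C$.

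The remaining cases proceed identically. For a weak norm on the left, replace the local integral by $\sup_{t}\Phi(t)w(B,\cdot/\lambda,t)$, drop $B$ to get the global distribution function, and invoke \eqref{ww modular M} when $1<i(\Phi)\le\infty$ with $\|f\|_{\wLPp(\R^n,w)}=1$, or \eqref{w modular M} when $i(\Phi)=1$ with $\|f\|_{\LPp(\R^n,w)}=1$; the support of $f\chi_{2B}$ again restricts the resulting distributional quantity to $2B$, after which $\lambda$ is chosen proportional to $\|f\|_{\Phi,\vp,w,2B,\weak}$ or $\|f\|_{\Phi,\vp,w,2B}$ as appropriate. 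The argument for $T$ is verbatim the same, with \eqref{modular CZO}, \eqref{ww modular CZO}, and \eqref{w modular CZO} replacing their maximal-operator counterparts; the constraint $I(\Phi)<\infty$ appears because these modular inequalities for $T$ require $\Phi\in\bdtwo$.

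The only step that warrants separate justification is the doubling estimate $\vp(B)w(B)\sim\vp(2B)w(2B)$. The inequality $\vp(B)w(B)\ls\vp(2B)w(2B)$ is immediate from the almost-increasing property of $r\mapsto\vp(x,r)w(B(x,r))$ built into the definition of $\cGdec_w$, while the reverse uses that $\vp$ is almost decreasing together with $w(2B)\ls w(B)$, which follows from $w\in A_{i(\Phi)}$ via \eqref{w kB}. Once this is in place, the remainder of the proof is routine bookkeeping with the infimum definitions of the (quasi-)norms.
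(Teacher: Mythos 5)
Your argument is correct and follows essentially the same route as the paper's proof: localize the modular over $B$ to the global one, apply the modular inequalities of Theorem~\ref{thm:modular} to the compactly supported piece $f\chi_{2B}$, and conclude via the equivalence $\vp(B)w(B)\sim\vp(2B)w(2B)$, which follows from $\vp\in\cGdec_w$ together with \eqref{w kB}. The only cosmetic point is that the multiplicative constant in front of the right-hand modular is not removed by your choice of $\lambda$ alone; as in the paper, one enlarges $\lambda$ by that factor and uses the (quasi-)convexity of $\Phi$, which is indeed part of the routine bookkeeping you defer.
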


\begin{proof}
We use Theorem~\ref{thm:modular}.
We only prove the case $i(\Phi)=1$ and $M$, since the other cases are similar.
If $i(\Phi)=1$ and $\|f\|_{\LPp(\R^n,w)}=1$,
then by \eqref{w modular M} we have 
\begin{align*}
 \sup_{t\in(0,\infty)}\Phi(t)\,w\left(B,{M(f\chi_{2B})}/C,t\right)
 &\le 
 \sup_{t\in(0,\infty)}\Phi(t)\,w\left({M(f\chi_{2B})}/C,t\right)
\\
 &\le C \int_{2B}\Phi(|f|)w(x)\,dx
\\
 &\le C \vp(2B)w(2B)
 \le C' \vp(B)w(B).
\end{align*}
We may assume that $C'\ge1$. 
Then
\begin{equation*}
 \sup_{t\in(0,\infty)}\Phi(t)\,w\left(B,M(f\chi_{2B})/(C'C),t\right)
 \le
 \vp(B)w(B),
\end{equation*}
which shows the conclusion.
\end{proof}

\begin{lem}\label{lem:Mf2}
Let $\Phi\in\bcY$, $w\in A_{i(\Phi)}$ and $\vp\in\cGdec_w$.
Let $B$ be a ball.
If one of the following three conditions holds;
{\rm (1)} $i(\Phi)=1$ and $\|f\|_{\LPp(\R^n,w)}=1$,
{\rm (2)} $1<i(\Phi)\le\infty$ and $\|f\|_{\LPp(\R^n,w)}=1$,
{\rm (3)} $1<i(\Phi)\le\infty$ and $\|f\|_{\wLPp(\R^n,w)}=1$, 
then
\begin{equation}\label{Mf2}
 M(f\chi_{(2B)^{\complement}})(x)\le C_0\Phi^{-1}(\vp(B)),
 \quad x\in B,
\end{equation}
where the constant $C_0$ is independent of $f$ and $B$.
\end{lem}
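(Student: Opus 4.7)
The plan is to reduce $M(f\chi_{(2B)^\complement})(x)$ to a supremum of averages over balls concentric with $B$, bound each such average by $\Phi^{-1}(\vp(B(a,\rho)))$ via the $A_p$-inequality \eqref{fint B} combined with the appropriate estimate from Lemma~\ref{lem:p}, and finally absorb the supremum using $\vp\in\cGdec_w$ and the quasi-concavity of $\Phi^{-1}$.

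First I would perform a geometric reduction: writing $B=B(a,r)$, if $B'=B(y,s)\ni x$ meets $(2B)^\complement$, an elementary computation shows $s>r/2$ and $B'\subset B(a,4s)$, so
\[
 \frac{1}{|B'|}\int_{B'}|f(z)|\chi_{(2B)^\complement}(z)\,dz
 \le 4^{n}\cdot\frac{1}{|B(a,4s)|}\int_{B(a,4s)}|f(z)|\,dz.
\]
Taking the supremum, this yields
\[
 M(f\chi_{(2B)^\complement})(x)\ls\sup_{\rho\ge 2r}\frac{1}{|B(a,\rho)|}\int_{B(a,\rho)}|f(z)|\,dz.
\]

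Next I would establish, with constant independent of the ball $\tB$, that
\[
 \frac{1}{|\tB|}\int_{\tB}|f(z)|\,dz\ls\Phi^{-1}(\vp(\tB)),
\]
in each of the three cases. In case (1) with $w\in A_1$, the pointwise $A_1$ estimate combined with \eqref{fint 1} gives the bound through $\frac{1}{|\tB|}\int_{\tB}|f|\ls\frac{1}{w(\tB)}\int_{\tB}|f|w\ls\Phi^{-1}(\vp(\tB))\|f\|_{\Phi,\vp,w,\tB}$. In case (2), the self-improvement of Muckenhoupt classes (together with $A_\infty=\bigcup_p A_p$ for the subcase $i(\Phi)=\infty$) supplies some $p\in(1,i(\Phi))$ with $w\in A_p$; Remark~\ref{rem:index 2} then tells us $t\mapsto\Phi(t)/t^p$ is almost increasing, and \eqref{fint B} followed by \eqref{fint p} delivers the bound. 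In case (3) I would pick $q\in[1,i(\Phi))$ with $w\in A_q$, and then an auxiliary $p\in(q,i(\Phi))$ so that $t\mapsto\Phi(t)/t^p$ is almost increasing; applying \eqref{fint B} with exponent $q$ and then \eqref{fint q} yields the bound in terms of the weak quasi-norm $\|f\|_{\Phi,\vp,w,\tB,\weak}$.

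Finally, since $\vp\in\cGdec_w$ is almost decreasing, $\vp(B(a,\rho))\ls\vp(B)$ for all $\rho\ge r$, and since $\Phi\in\bcY$ is (equivalent to) convex, $\Phi^{-1}$ is quasi-concave, i.e.\ $\Phi^{-1}(Cu)\ls\Phi^{-1}(u)$ for $C\ge 1$. Combining everything,
\[
 M(f\chi_{(2B)^\complement})(x)\ls\sup_{\rho\ge 2r}\Phi^{-1}(\vp(B(a,\rho)))\ls\Phi^{-1}(\vp(B)),
\]
which is the desired estimate. The principal delicate point is the bookkeeping of exponents in case (3): one must simultaneously secure $q<p<i(\Phi)$ with $w\in A_q$ and $t\mapsto\Phi(t)/t^p$ almost increasing, which is made possible precisely by the assumption $w\in A_{i(\Phi)}$ combined with $i(\Phi)>1$.
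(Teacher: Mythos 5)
Your proposal is correct and follows essentially the same route as the paper's proof: a geometric reduction of $M(f\chi_{(2B)^{\complement}})(x)$ to averages over balls concentric with $B$ of comparable or larger radius, followed by the same three-case application of \eqref{fint B} together with \eqref{fint 1}, \eqref{fint p} and \eqref{fint q}, and the same absorption step via the almost-decreasing property of $\vp$ and the doubling of $\Phi^{-1}$. The only differences are cosmetic (you use $B(a,4s)$ where the paper uses $B(a,3r')$, and you phrase the exponent bookkeeping in case (3) slightly more explicitly).
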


\begin{proof}
Let $f_2=f\chi_{(2B)^{\complement}}$,
$B=B(a,r)$ and $x\in B$.
We show that, for all balls $B'\ni x$,
\begin{equation*} 
 \frac{1}{|B'|}\int_{B'} |f_2(x)|\,dx
 \ls
 \Phi^{-1}(\vp(B)).
\end{equation*}
Let $B'=B(z,r')$.
If $r'\le r/2$, 
then $\int_{B'} |f_2(y)|\,dy=0$, since $B'\subset2B$.
If $r'> r/2$, 
then $B'\subset B(a,3r')$.
Setting $B''=B(a,3r')$, we have 
\begin{equation*}
 \frac{1}{|B'|}\int_{B'} |f_2(x)|\,dx
 \ls
 \frac{1}{|B''|}\int_{B''} |f_2(x)|\,dx.
\end{equation*}
If we show 
\begin{equation}\label{B''}
 \frac{1}{|B''|}\int_{B''} |f_2(x)|\,dx
 \ls
 \Phi^{-1}(\vp(B'')),
\end{equation}
then we have \eqref{Mf2}, 
since $\vp$ is almost decreasing and $\Phi^{-1}$ satisfies the doubling condition.

\noindent
Case (1): We use \eqref{fint B} and \eqref{fint 1}.
Since $w\in A_1$, we have
\begin{equation*}
 \frac{1}{|B''|}\int_{B''} |f_2(x)|\,dx
 \ls
 \frac{1}{w(B'')}\int_{B''} |f_2(x)|w(x)\,dx
 \ls
 \Phi^{-1}(\vp(B'')).
\end{equation*}

\noindent
Case (2): We use \eqref{fint B} and \eqref{fint p}.
Since $i(\Phi)>1$ and $w\in A_{i(\Phi)}$,
we can take $p\in(1,i(\Phi))$ such that $w\in A_p$.
In this case 
$t\mapsto\Phi(t)/t^p$ is almost increasing and
\begin{equation*}
 \frac{1}{|B''|}\int_{B''} |f_2(x)|\,dx
 \ls
 \left(\frac{1}{w(B'')}\int_{B''} |f_2(x)|^pw(x)\,dx\right)^{1/p}
 \ls
 \Phi^{-1}(\vp(B'')).
\end{equation*}

\noindent
Case (3): We use \eqref{fint B} and \eqref{fint q}.
Since $i(\Phi)>1$ and $w\in A_{i(\Phi)}$,
we can take $q\in(1,i(\Phi))$ such that $w\in A_q$.
In this case 
$t\mapsto\Phi(t)/t^p$ is almost increasing for $p\in(q,i(\Phi))$ and
\begin{equation*}
 \frac{1}{|B''|}\int_{B''} |f_2(x)|\,dx
 \ls
 \left(\frac{1}{w(B'')}\int_{B''} |f_2(x)|^qw(x)\,dx\right)^{1/q}
 \ls
 \Phi^{-1}(\vp(B'')).
\end{equation*}
Therefore, we have the conclusion.
\end{proof}

\begin{lem}\label{lem:Tf2}
Let $\Phi\in\bcY$, $w\in A_{i(\Phi)}$ and $\vp\in\cGdec_w$.
Assume that $\vp$ satisfies \eqref{int vp x}.
Let $B$ be a ball.
If one of the following three conditions holds;
{\rm (1)} $i(\Phi)=1$ and $\|f\|_{\LPp(\R^n,w)}=1$,
{\rm (2)} $1<i(\Phi)\le I(\Phi)<\infty$ and $\|f\|_{\LPp(\R^n,w)}=1$,
{\rm (3)} $1<i(\Phi)\le I(\Phi)<\infty$ and $\|f\|_{\wLPp(\R^n,w)}=1$, 
then 
\begin{equation*} 
 \int_{\R^n\setminus{2B}}|K(x,y)f(y)|\,dy
 \le
 C_0 \Phi^{-1}(\vp(B)),
 \quad x\in B,
\end{equation*}
where the constant $C_0$ is independent of $f$ and $B$.
\end{lem}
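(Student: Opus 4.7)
The plan is a standard Calder\'on--Zygmund annular decomposition, combined with the averaging estimates already used in the proof of Lemma~\ref{lem:Mf2}. Write $B=B(a,r)$. For $x\in B$ and $y\in 2^{k+1}B\setminus 2^k B$ with $k\ge1$ one has $|x-y|\gs 2^k r$, so the size bound on the standard kernel together with $|2^{k+1}B|\sim(2^k r)^n$ yields
\begin{equation*}
 \int_{\R^n\setminus 2B}|K(x,y)f(y)|\,dy
 =\sum_{k=1}^{\infty}\int_{2^{k+1}B\setminus 2^k B}|K(x,y)f(y)|\,dy
 \ls
 \sum_{k=1}^{\infty}\frac{1}{|2^{k+1}B|}\int_{2^{k+1}B}|f(y)|\,dy.
\end{equation*}

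Next, I would bound each average by $\Phi^{-1}(\vp(2^{k+1}B))$ by mimicking the three cases of the proof of Lemma~\ref{lem:Mf2}. In case (1), where $w\in A_1$, apply \eqref{fint B} with exponent $1$ and then \eqref{fint 1}. In case (2), pick $p\in(1,i(\Phi))$ with $w\in A_p$ and use \eqref{fint B} followed by \eqref{fint p}. In case (3), pick $q\in(1,i(\Phi))$ with $w\in A_q$ and some $p\in(q,i(\Phi))$ so that $t\mapsto\Phi(t)/t^p$ is almost increasing, and then apply \eqref{fint B} followed by \eqref{fint q}. In all three situations the normalization $\|f\|_{\LPp(\R^n,w)}=1$ or $\|f\|_{\wLPp(\R^n,w)}=1$ gives $\|f\|_{\Phi,\vp,w,2^{k+1}B}\le 1$ or $\|f\|_{\Phi,\vp,w,2^{k+1}B,\weak}\le 1$, which is what Lemma~\ref{lem:p} requires.

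It remains to sum the resulting series. Since $\vp\in\cGdec_w$ is almost decreasing in the radius and $\Phi\in\bdtwo$ (so $\Phi^{-1}$ satisfies the doubling condition; this uses $I(\Phi)<\infty$, which is in force in every setting where the lemma is invoked), a routine comparison of the sum with an integral gives
\begin{equation*}
 \sum_{k=1}^{\infty}\Phi^{-1}(\vp(2^{k+1}B))
 \ls
 \int_{r}^{\infty}\frac{\Phi^{-1}(\vp(a,t))}{t}\,dt,
\end{equation*}
and Lemma~\ref{lem:int Phi vp}, which uses precisely the hypothesis \eqref{int vp x}, bounds the right-hand side by $C\Phi^{-1}(\vp(a,r))=C\Phi^{-1}(\vp(B))$, finishing the proof.

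The main obstacle, such as it is, is purely bookkeeping: selecting the correct integrability exponent in each of the three cases so that Lemma~\ref{lem:p} applies, and verifying the doubling and almost-monotonicity needed to pass from the dyadic sum to the integral against $dt/t$. The annular geometry and the use of \eqref{int vp x} via Lemma~\ref{lem:int Phi vp} are entirely standard.
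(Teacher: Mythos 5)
Your proposal is correct and follows essentially the same route as the paper: the dyadic annular decomposition with the kernel size bound, the three-case application of Lemma~\ref{lem:p} together with \eqref{fint B} exactly as in Lemma~\ref{lem:Mf2}, and the summation of $\Phi^{-1}(\vp(2^kB))$ by comparison with $\int_r^\infty \Phi^{-1}(\vp(a,t))\,dt/t$ via Lemma~\ref{lem:int Phi vp}. Your remark that the doubling of $\Phi^{-1}$ rests on $I(\Phi)<\infty$ matches the paper's opening reduction to $\Phi\in\dtwo$ via Remark~\ref{rem:index 2}.
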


\begin{proof}
By Remark~\ref{rem:index 2}~\ref{item:bdtwo}
we may assume that $\Phi\in\dtwo$.
Let $B=B(a,r)$ and $B_k=B(a,2^kr)$, $k=1,2,\ldots$.
Then
\begin{align*}
 \int_{\R^n\setminus{2B}} |K(x,y)f(y)|\,dy 
 &=
 \sum_{k=2}^{\infty}\int_{B_k\setminus B_{k-1}} |K(x,y) f(y)|\,dy
\\
 &\ls
 \sum_{k=2}^\infty \frac{1}{|B_k|}\int_{B_k} |f(y)|\,dy.
\end{align*}
For each case of (1), (2) and (3),
by the same way as in the proof of the previous lemma,
we have 
\begin{equation*} 
 \frac{1}{|B_k|}\int_{B_k} |f(y)|\,dy
 \ls
 \Phi^{-1}(\vp(B_k)),
\end{equation*}
instead of \eqref{B''}.
By the doubling condition of $\Phi^{-1}(\vp(\cdot))$ and Lemma~\ref{lem:int Phi vp}
we have
\begin{align*}
 \int_{\R^n\setminus{2B}} |K(x,y)f(y)|\,dy 
 &\ls
  \sum_{k=2}^\infty \Phi^{-1}(\vp(B_k))
\\ 
 &\sim
 \sum_{k=2}^\infty
 \int_{2^{k-1}r}^{2^{k}r}\frac{\Phi^{-1}(\vp(a,t))}{t}\,dt
\\
 &\le 
 \int_r^\infty\frac{\Phi^{-1}(\vp(a,t))}{t}\,dt
 \ls 
 \Phi^{-1}(\vp(B)),
\end{align*}
which shows the conclusion.
\end{proof}

Now we prove Theorem~\ref{thm:M CZO}.

\begin{proof}[\bf Proof of Theorem~\ref{thm:M CZO} (i)]\label{proof of Theorem CZO (i)}
Let $f \in \LPp(\R^n,w)$ or $f \in \wLPp(\R^n,w)$.
We may assume that $\|f\|_{\LPp(\R^n,w)}=1$ or $\|f\|_{\wLPp(\R^n,w)}=1$, respectively.
We will show that $\|Mf\|_{\Phi,\vp,w,B}\le C$ or $\|Mf\|_{\Phi,\vp,w,B,\weak}\le C$
for any ball $B=B(a,r)$.
Let $f=f_1+f_2$ with $f_1 = f\chi_{2B}$.
If $i(\Phi)=1$ and $\|f\|_{\LPp(\R^n,w)}=1$, or, 
if $1<i(\Phi)\le\infty$ and $\|f\|_{\wLPp(\R^n,w)}=1$,
then
$\|Mf_1\|_{\Phi,\vp,w,B,\weak}\le C$
by Lemma~\ref{lem:Mf1}.
If $1<i(\Phi)\le\infty$ and $\|f\|_{\LPp(\R^n,w)}=1$,
then
$\|Mf_1\|_{\Phi,\vp,w,B}\le C$
by Lemma~\ref{lem:Mf1}.
Moreover, by Lemma~\ref{lem:Mf2} we have
\begin{equation*}
 \|Mf_2\|_{\Phi,\vp,w,B,\weak}
 \le
 \|Mf_2\|_{\Phi,\vp,w,B}
 \le
 C_0.
\end{equation*}
The proof is complete.
\end{proof}

\begin{proof}[\bf Proof of Theorem~\ref{thm:M CZO} (ii)]\label{proof of Theorem CZO (ii)}
Let $f \in \LPp(\R^n,w)$ or $f \in \wLPp(\R^n,w)$.
We may assume that $\|f\|_{\LPp(\R^n,w)}=1$ or $\|f\|_{\wLPp(\R^n,w)}=1$, respectively.
For any ball $B=B(a,r)$,
let $f=f_1+f_2$ with $f_1 = f\chi_{2B}$,
and 
let
\begin{equation}\label{Tf def}
 Tf(x)=Tf_1(x)+\int_{\R^n}K(x,y)f_2(y)\,dy,
 \quad x\in B.
\end{equation}
We will show that $Tf(x)$ in \eqref{Tf def} is well defined and
independent of the choice of $B$ containing $x$
and that $T$ is bounded.

For the part $Tf_1$, 
by Lemma~\ref{lem:Mf1},
if $i(\Phi)=1$ and $\|f\|_{\LPp(\R^n,w)}=1$, or, 
if $1<i(\Phi)\le I(\Phi)<\infty$ and $\|f\|_{\wLPp(\R^n,w)}=1$,
then
\begin{equation}\label{Tf1 w}
 \|Tf_1\|_{\Phi,\vp,w,B,\weak}\le C.
\end{equation}
If $1<i(\Phi)\le I(\Phi)<\infty$ and $\|f\|_{\LPp(\R^n,w)}=1$,
then
\begin{equation}\label{Tf1}
 \|Tf_1\|_{\Phi,\vp,w,B}\le C.
\end{equation}
Moreover, 
by Lemma~\ref{lem:Tf2}
we have
\begin{equation*}
 \left|\int_{\R^n} K(x,y)f_2(y)\,dy\right|
 \le
 \int_{\R^n\setminus{2B}}|K(x,y)f(y)|\,dy
 \le
 C_0 \Phi^{-1}(\vp(B)),
 \quad x\in B.
\end{equation*}
Then
\begin{align*}
 \int_B
  \Phi\left(
    \frac{\left|\int_{\R^n} K(x,y)f_2(y)\,dy\right|}{C_0}
  \right)
 w(x)\,dx
 &\le
 \int_B
  \Phi\left(
   \Phi^{-1}(\vp(B))
  \right)
 w(x)\,dx
\\
 &=
 \vp(B)w(B),
\end{align*}
that is,
\begin{equation}\label{Tf2}
 \left\|\int_{\R^n}K(\cdot,y)f_2(y)\,dy\right\|_{\Phi,\vp,w,B,\weak}
 \le
 \left\|\int_{\R^n}K(\cdot,y)f_2(y)\,dy\right\|_{\Phi,\vp,w,B}
 \le
 C_0.
\end{equation}

Moreover, if $x \in B\cap B'$ and
\begin{equation*}
 f = f_1 + f_2 = g_1 + g_2, 
 \quad f_1 = f\chi_{2B},
 \quad g_1 = g\chi_{2B'}
\end{equation*}
then 
$\supp(f_2 - g_2)$ is compact and $x \notin \supp(f_2 - g_2)$. 
From \eqref{CZ3}, it follows that
\begin{equation*}
 \int_{\R^n} K(x, y)\left(f_2(y) - g_2(y) \right)\,dy
 =
 T(f_2-g_2)(x).
\end{equation*}
Hence
\begin{equation*}
 \left(Tf_1(x) + \int_{\R^n} K(x,y)f_2(y) dy\right)
 -
 \left(Tg_1(x) + \int_{\R^n} K(x,y)g_2(y) dy\right)
 = 0.
\end{equation*}
Therefore, $Tf(x)$ in \eqref{Tf def} is well defined and
independent of the choice of $B$ containing $x$.
Further, by \eqref{Tf1 w}, \eqref{Tf1} and \eqref{Tf2} we have
\begin{equation*}
 \|Tf\|_{\Phi,\vp,w,B,\weak}\le C 
 \quad\text{or}\quad
 \|Tf\|_{\Phi,\vp,w,B}\le C,
 \quad\text{for all balls $B$},
\end{equation*}
which shows the conclusion.
\end{proof}

\section*{Acknowledgement}
The authors would like to thank the referee for her/him careful reading 
and useful comments.
The second author was supported by Grant-in-Aid for Scientific Research (B), 
No.~15H03621 and 20H01815, Japan Society for the Promotion of Science.



\begin{thebibliography}{99}

\bibitem{Coifman1972}
R.~R.~Coifman, 
Distribution function inequalities for singular integrals, 
Proc. Nat. Acad. Sci. U.S.A. 69 (1972), 2838--2839. 

\bibitem{Coifman-Fefferman1974}
R.~R.~Coifman and C.~Fefferman, 
Weighted norm inequalities for maximal functions and singular integrals, 
Studia Math. 51 (1974), 241--250.

\bibitem{Curbera-GarciaCuerva-Martell-Perez2006}
G.~P.~Curbera, J.~Garc\'ia-Cuerva, J.~M.~Martell and C.~P\'erez, 
Extrapolation with weights, rearrangement-invariant function spaces, 
modular inequalities and applications to singular integrals,
Adv. Math. 203 (2006), No.~1, 256--318. 

\bibitem{Deringoz-Guliyev-Nakai-Sawano-Shi2019Posi} 
F.~Deringoz, V.S.~Guliyev, E.~Nakai, Y.~Sawano and M.~Shi,
Generalized fractional maximal and integral operators 
on Orlicz and generalized Orlicz-Morrey spaces of the third kind,
Positivity 23 (2019), No.~3, 727--757.

\bibitem{Deringoz-Guliyev-Samko2014} 
F.~Deringoz, V.S.~Guliyev and S.~Samko,
Boundedness of the maximal and singular operators on generalized Orlicz-Morrey spaces. 
Operator theory, operator algebras and applications, 139--158, Oper. Theory Adv. Appl., 242, 
Birkh\"auser/Springer, Basel, 2014. 

\bibitem{Fu-Yang-Yuan2012}
X.~Fu, D.~Yang and W.~Yuan, 
Boundedness of multilinear commutators of Calder\'on-Zygmund operators 
on Orlicz spaces over non-homogeneous spaces. 
Taiwanese J. Math. 16 (2012), No.~6, 2203--2238.

\bibitem{Gala-Sawano-Tanaka2015}  
S.~Gala, Y.~Sawano and H.~Tanaka, 
A remark on two generalized Orlicz-Morrey spaces, 
J. Approx. Theory 198 (2015), 1--9. 

\bibitem{GarciaCuerva-RubiodeFrancia1985}
J.~Garc\'ia-Cuerva and J.~Rubio de Francia, 
Weighted norm inequalities and related topics. 
North-Holland Mathematics Studies, 
116. Notas de Matem\'atica [Mathematical Notes], 104. 
North-Holland Publishing Co., Amsterdam, 1985. 
x+604 pp. ISBN: 0-444-87804-1 

\bibitem{Grafakos2014GTM249}
L.~Grafakos, 
Classical Fourier analysis. Third edition. 
Graduate Texts in Mathematics, 249. 
Springer, New York, 2014. 

\bibitem{Guliyev-Hasanov-Sawano-Noi2016}
V.~S.~Guliyev, S.~G.~Hasanov, Y.~Sawano and T.~Noi, 
Non-smooth atomic decompositions for generalized Orlicz-Morrey spaces of the third kind,
Acta Appl. Math. 145 (2016), No.~1, 133--174. 

\bibitem{Gustavsson-Peetre1977}
J.~Gustavsson and J.~Peetre,  
Interpolation of Orlicz spaces, 
Studia Math. 60 (1977),  No.~1, 33--59. 

\bibitem{Ho2013} 
K.-P.~Ho, 
Vector-valued maximal inequalities on weighted Orlicz-Morrey spaces, 
Tokyo J. Math. 36 (2013), No.~2, 499--512.

\bibitem{Ho2019} 
K.-P.~Ho, 
Weak type estimates of singular integral operators on Morrey-Banach spaces, 
Integral Equations Operator Theory 91 (2019), No.~3, Paper No.~20, 18 pp. 

\bibitem{Ho2020} 
K.-P.~Ho, 
Definability of singular integral operators on Morrey-Banach spaces, 
J. Math. Soc. Japan 72 (2020), No.~1, 155--170. 

\bibitem{Hunt-Muckenhoupt-Wheeden1973}
R.~Hunt, B.~Muckenhoupt and R.~Wheeden, 
Weighted norm inequalities for the conjugate function and Hilbert transform, 
Trans. Amer. Math. Soc. 176 (1973), 227--251. 

\bibitem{Kawasumi-Nakai2020Hiroshima}
R.~Kawasumi and E.~Nakai,
Pointwise multipliers on weak Orlicz spaces,
Hiroshima Math. J. 50 (2020), No.~2, 169--184.

\bibitem{Kawasumi-Nakai-Shi2021MathNachr}
R.~Kawasumi, E.~Nakai and M.~Shi,  
Characterization of the boundedness of generalized fractional integral
and maximal operators on Orlicz-Morrey and weak Orlicz-Morrey spaces,
to appear in Math. Nachr. \\
https://arxiv.org/abs/2107.10553

\bibitem{Kita2009}
H.~Kita,
Orlicz spaces and their applications (Japanese),
Iwanami Shoten, Publishers. Tokyo, 2009.

\bibitem{Kokilashvili-Krbec1991}
V.~Kokilashvili and M.~Krbec,
{Weighted inequalities in Lorentz and Orlicz spaces},
World Scientific Publishing Co., Inc., River Edge, NJ, 1991. 

\bibitem{Komori2012}
Y.~Komori-Furuya,
Weak $L^p$-boundedness of the Hardy-Littlewood maximal function (in Japanese),
Report collection of Harmonic Analysis Seminar 2011 at Osaka University, 39--42, printed in 2012.

\bibitem{Komori-Shirai2009}
Y.~Komori and S.~Shirai, 
Weighted Morrey spaces and a singular integral operator. 
Math. Nachr. 282 (2009), No.~2, 219--231. 

\bibitem{Liu-Wang2013}
P.~Liu and M.~Wang,
Weak Orlicz spaces: Some basic properties and their applications to harmonic analysis,
Sci. China. Math. 56 (2013), 789--802.

\bibitem{Krasnoselsky-Rutitsky1961}
M,~A.~Krasnoselsky and Y.~B.~Rutitsky, 
Convex functions and Orlicz spaces. 
Translated from the first Russian edition by Leo F. Boron. 
P. Noordhoff Ltd., Groningen 1961.

\bibitem{Maligranda1985}
L.~Maligranda, 
Indices and interpolation, 
Dissertationes Math. (Rozprawy Mat.)  234 (1985), 49 pp. 

\bibitem{Maligranda1989}
L. Maligranda,
Orlicz spaces and interpolation,
Seminars in mathematics 5,
Departamento de Matem\'atica, Universidade Estadual de Campinas, Brasil, 1989.

\bibitem{Muckenhoupt1972}
B.~Muckenhoupt, 
Weighted norm inequalities for the Hardy maximal function, 
Trans. Amer. Math. Soc. 165 (1972), 207--226.






\bibitem{Nakai2004KIT}
E. Nakai,
{Generalized fractional integrals on Orlicz-Morrey spaces}, 
Banach and Function Spaces
(Kitakyushu, 2003), Yokohama Publishers, Yokohama, 2004, 323--333.


\bibitem{Nakai2008Studia}
E.~Nakai,
{Orlicz-Morrey spaces and the Hardy-Littlewood maximal function},
Studia Math. 188 (2008), No~3, 193--221.

\bibitem{Nakai2008KIT}
E.~Nakai, 
Calder\'on-Zygmund operators on Orlicz-Morrey spaces and modular inequalities.  
Banach and function spaces II, 393--410, Yokohama Publ., Yokohama, 2008. 





\bibitem{ONeil1965}
R.~O'Neil,
{Fractional integration in Orlicz spaces. I.}, 
Trans. Amer. Math. Soc. 115 (1965), 300--328. 

\bibitem{Sawano2019} 
Y.~Sawano, 
Singular integral operators acting on Orlicz-Morrey spaces of the first kind,
Nonlinear Stud.  26  (2019),  No.~4, 895--910. 

\bibitem{Sawano-Sugano-Tanaka2012}
Y.~Sawano, S.~Sugano and H.~Tanaka,  
Orlicz-Morrey spaces and fractional operators,
Potential Anal. 36 (2012), No.~4, 517--556. 

\bibitem{Shi-Arai-Nakai2019Taiwan}
M.~Shi, R.~Arai and E.~Nakai, 
Generalized fractional integral operators 
and their commutators with functions in generalized Campanato spaces on Orlicz spaces,
Taiwanese J. Math. 23 (2019), No. 6, 1339--1364.

\bibitem{Shi-Arai-Nakai2021Banach}
M.~Shi, R.~Arai and E.~Nakai, 
Commutators of integral operators with functions in Campanato spaces on Orlicz-Morrey spaces,
Banach J. Math. Anal. 15 (2021), No.~1, Paper No.~22, 41pp. 

\bibitem{Torchinsky1976}
A. Torchinsky, 
Interpolation of operations and Orlicz classes, 
Studia Math. 59 (1976), No.~2, 177--207. 

\bibitem{Yabuta1985}
K.~Yabuta, 
Generalizations of Calder\'on-Zygmund operators,
Studia Math. 82 (1985), 17--31.

\end{thebibliography}
\end{document}